\theoremstyle{plain}
\newtheorem{thm}{Theorem}
\newtheorem{lem}[thm]{Lemma}
\newtheorem{prop}[thm]{Proposition}
\theoremstyle{definition}
\newtheorem{eg}[thm]{Example}
\newtheorem{case}{Case}
\newtheorem{problem}[thm]{Problem}
\theoremstyle{remark}
\newtheorem{rmk}[thm]{Remark}
\numberwithin{equation}{section}
\def\Z{{\mathbb Z}}
\def\Q{{\mathbb Q}}
\def\C{{\mathbb C}}
\def\A{{\mathbb A}}
\def\P{{\mathbb P}}
\def\I{\mathcal{I}}
\def\J{\mathcal{J}}
\def\O{\mathcal{O}}
\def\fa{\mathfrak{a}}
\def\fm{\mathfrak{m}}
\def\a{\alpha}
\def\b{\beta}
\def\f{\phi}
\def\k{\kappa}
\def\p{\pi}
\def\s{\sigma}
\def\.{\cdot}
\def\^{\widehat}
\def\~{\widetilde}
\def\o{\circ}
\def\ov{\overline}
\def\rat{\dashrightarrow}
\def\inj{\hookrightarrow}
\def\lru{\lceil}
\def\rru{\rceil}
\def\({\left(}
\def\){\right)}
\newcommand{\ru}[1]{\lru{#1}\rru}
\renewcommand{\and}{ \ \ \text{ and } \ \ }
\DeclareMathOperator{\codim} {codim}
\DeclareMathOperator{\Spec} {Spec}
\DeclareMathOperator{\Aut} {Aut}
\DeclareMathOperator{\Bir} {Bir}
\DeclareMathOperator{\Bl} {Bl}
\DeclareMathOperator{\val} {val}
\DeclareMathOperator{\Ex} {Ex}
\DeclareMathOperator{\mult} {mult}
\DeclareMathOperator{\ord} {ord}
\DeclareMathOperator{\Supp} {Supp}
\DeclareMathOperator{\lct} {lct}
\begin{document}

\title{Fano hypersurfaces and their birational geometry}

\author{Tommaso de Fernex}

\address{Department of Mathematics, University of Utah\\ 
155 South 1400 East, Salt Lake City, UT 48112-0090, USA}
\email{{\tt defernex@math.utah.edu}}

\thanks{2010 {\it  Mathematics Subject Classification.}
Primary: 14E08; Secondary: 14J45, 14E05, 14B05, 14N30.}
\thanks{{\it Key words and phrases.}
Fano hypersurface, Mori fiber space, birational rigidity.}

\thanks{The research was partially supported by NSF CAREER grant DMS-0847059
and a Simons Fellowship.}

\thanks{The author would like to thank the referee for useful comments and suggestions.}

\thanks{Compiled on \today. Filename {\tt \jobname}}

\begin{abstract}
We survey some results on the nonrationality and birational rigidity of certain hypersurfaces of Fano type. The focus is on hypersurfaces of Fano index one, but hypersurfaces of higher index are also discussed.
\end{abstract}

\maketitle


\section{Introduction}

This paper gives an account of the main result of 
\cite{dF13}, which states that every smooth complex hypersurface
of degree $N$ in $\P^N$, for $N \ge 4$, is birationally superrigid.
The result is contextualized within the framework of smooth Fano
hypersurfaces in projective spaces and the problem of rationality. 
The paper overviews the history 
of the problem and the main ideas that come into play in its solution, 
from the method of maximal singularities to the use of arc spaces
and multiplier ideals.

Working over fields that are not necessarily algebraically closed, 
we also discuss an extension of a theorem of Segre and Manin
stating that every smooth projective cubic surface of Picard number one
over a perfect field is birationally rigid. 
The proof, which is an adaptation of the arguments of Segre and Manin, 
is a simple manifestation of the method of maximal singularities.

The last section of the paper explores hypersurfaces in $\P^N$ of degree $d < N$. 
We suspect that the result of \cite{dF13} is an extreme case of a more general
phenomenon, and propose two problems which suggest that the 
birational geometry of Fano hypersurfaces should progressively become more rigid
as their degree $d$ approaches $N$.
A theorem of \cite{Kol95} brings some evidence to this phenomenon.

Unless stated otherwise, we work over the field of complex numbers $\C$.
Some familiarity with the basic notions of singularities 
of pairs and multiplier ideals will be assumed;
basic references on the subject are \cite{KM98,Laz04}.

\section{Mori fiber spaces and birational rigidity}

Projective hypersurfaces form a rich class of varieties from the point of view of
rationality problems and related questions. 
We focus on smooth hypersurfaces, and let
\[
X = X_d \subset \P^N
\]
denote a smooth complex projective hypersurface of dimension $N-1$ and degree $d$.
By adjunction, $X$ is Fano (i.e., its anticanonical class $-K_X$ is ample)
if and only if $d \le N$. 

If $d \le 2$ then $X$ is clearly rational with trivial moduli, 
and there is no much else to say. 
However, already in degree $d=3$ the situation becomes rather delicate. 
Cubic surfaces are rational, but cubic threefolds are nonrational by a theorem of 
Clemens and Griffiths \cite{CG72}. 
Moving up in dimension, we find several examples of families
of rational cubics fourfolds \cite{Mor40,Fan44,Tre84,BD85,Zar90,Has99,Has00}, 
with those due to Hassett filling up a countable union 
of irreducible families of codimension 2 in the moduli space of cubic 
hypersurfaces in $\P^5$. By contrast, a conjecture of Kuznetsov \cite{Kuz10}
predicts that the very general cubic fourfold should be nonrational. 
Apart from simple considerations (e.g., rationality of cubic
hypersurfaces of even dimension
containing disjoint linear subspaces of half the dimension)
no much is known in higher dimensions, and
there is no clear speculation on what the picture should be.
In degree $d=4$, we only have Iskovskikh and Manin's theorem on the nonrationality of
$X_4 \subset \P^4$ \cite{IM71}. 

The situation starts to show a more uniform behavior if one bounds the
degree from below in terms of the dimension. 
A result in this direction is due to Koll\'ar \cite{Kol95}.

\begin{thm}
\label{t:kollar}
Let $X = X_d \subset \P^N$ be a very general hypersurface. 
\begin{enumerate}
\item
If $2 \ru{(N+2)/3} \le d \le N$, then $X$ is not ruled (hence is nonrational).
\item
If $3 \ru{(N+2)/4} \le d \le N$, then $X$ is not 
birationally equivalent to any conic bundle.
\end{enumerate}
\end{thm}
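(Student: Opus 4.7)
The plan is to follow Kollár's strategy, which combines a cohomological obstruction with a characteristic-$p$ construction of global differential forms. The obstruction side shows that on any smooth projective variety that is separably uniruled (resp.\ birational to a conic bundle), certain twisted sheaves of top differentials have no global sections, while the construction side produces precisely such sections on the very general hypersurface of the prescribed degree by degeneration to positive characteristic.

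For the obstruction: if a smooth projective $X$ of dimension $n=N-1$ is separably uniruled, then restricting a tensor power of a top differential to a general rational curve of the covering family forces the vanishing of $H^0\bigl(X,(\Omega^n_X)^{\otimes m}\otimes L^{-1}\bigr)$ for every $m\ge 1$ and every $L$ of appropriate positivity, since $\P^1$ admits no such form against an ample twist. If instead $X$ is birational to a conic bundle $Y\to Z$ with $\dim Z=n-1$, the differentials must descend to $Z$ modulo contributions from the $\P^1$-fiber, yielding an analogous but weaker vanishing, with the positivity threshold shifted by one extra power to account for the fiber.

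For the construction: I would specialize $X$ to a hypersurface $X_0$ over a field of characteristic $p$ with $p\le d$ close to $d$ (heuristically $p\approx d/2$ in case (a) and $p\approx d/3$ in case (b)), and take $X_0$ to have an equation of Frobenius-compatible shape, for instance a generic linear combination of $p$-th powers of lower-degree forms. A cyclic $\mu_p$-cover of $X_0$, or equivalently the interplay between the conormal sequence of $X_0\subset\P^N$ and the Frobenius morphism, produces explicit nonzero sections of $\Omega^n_{X_0}\otimes\O_{X_0}(-kH)$ with $k$ of order $d-d/p$. Upper semicontinuity of $h^0$ in a flat family over a mixed-characteristic base then lifts these sections to the very general $X$ in characteristic zero, violating the obstruction. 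The thresholds $d\ge 2\ru{(N+2)/3}$ and $d\ge 3\ru{(N+2)/4}$ fall out of optimizing the ratio $d/p$ against the positivity demanded by the respective obstructions, the worse bound in (b) reflecting the extra factor needed to kill the $\P^1$-fiber of the conic bundle.

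The main obstacle is the explicit construction in positive characteristic: selecting the right $X_0$ and cyclic cover, computing $H^0$ of the twisted top differentials via the conormal sequence, and tracking how Frobenius creates forms invisible in characteristic zero while meeting the sharp numerical bound on $k$. Once this construction is in place, the semicontinuity step and the formal obstruction lemmas for ruled varieties and conic bundles are standard.
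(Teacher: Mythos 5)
You should first note that the paper itself does not prove Theorem~\ref{t:kollar}: it is quoted from \cite{Kol95}, so your sketch has to be measured against Koll\'ar's argument. You have correctly identified the general strategy (degeneration to positive characteristic, differential forms produced by inseparable cyclic covers, an obstruction valid for ruled varieties and conic bundles), but your architecture has a fatal reversal. Semicontinuity bounds $h^0$ of the \emph{generic} fibre by $h^0$ of the \emph{special} fibre, so you cannot ``lift'' the characteristic-$p$ sections to the very general complex hypersurface; and no such sections can exist there anyway: for $d\le N$ the smooth complex hypersurface is Fano, hence rationally connected, and carries no big invertible subsheaf of any $\Omega^i_X$, let alone sections of $(\Omega^{N-1}_X)^{\otimes m}\otimes L^{-1}$. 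The forms are a genuinely inseparable phenomenon confined to the special fibre. Koll\'ar's proof runs in the opposite direction: ruledness, resp.\ the conic bundle structure, is specialized (after a Chow-variety argument needed because the hypothesis is only ``very general'') from the characteristic-$0$ fibre to the characteristic-$p$ degeneration, where it forces separable uniruledness, and the contradiction is obtained \emph{there}, from the lemma that a separably uniruled smooth proper variety admits no big invertible subsheaf of $\Omega^i$ for any $i\ge 1$.

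Your obstruction object and numerology are also off in a way that matters. Pluricanonical-type sheaves $(\Omega^{\dim})^{\otimes m}\otimes L^{-1}$ never acquire sections in the Fano range, even on the degeneration: the dualizing sheaf of the inseparable cover is $\pi^*(\omega_Y\otimes L^{p-1})$, still negative for $d\le N$. Koll\'ar's gain is a big invertible subsheaf of $\Omega^{\dim-1}$, of type $\pi^*(\omega_Y\otimes L^{p})$ up to corrections supported on the exceptional locus of a resolution. Concretely, degree-$d=ps$ hypersurfaces degenerate (via $G^p+\lambda H=0$, base change $\lambda=\mu^p$, normalization) to $p$-fold covers of a degree-$s$ hypersurface $Y\subset\P^N$ branched along a degree-$d$ form, which in characteristic $p$ are purely inseparable; here $L=\O_Y(s)$ and $\omega_Y\otimes L^p=\O_Y((p+1)s-N-1)$ is big exactly when $d\ge p\ru{(N+2)/(p+1)}$, which for $p=2$ and $p=3$ reproduces precisely the two stated thresholds. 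So the primes are small and fixed --- the ``$2$'' and ``$3$'' in the statement are the characteristics --- not $p\approx d/2$ or $d/3$; and a ``generic linear combination of $p$-th powers'' is itself a $p$-th power in characteristic $p$, hence defines a non-reduced hypersurface and cannot serve as your special fibre. Finally, the weaker constant in (b) has nothing to do with ``an extra power to account for the $\P^1$-fibre'': it comes from having to avoid $p=2$, where the specialized conic bundle can be wild (geometrically non-reduced generic conic), so it need not be separably uniruled and the differential-form obstruction would not apply; in characteristic $3$ a conic bundle is separably uniruled, at the price of using a degree-$3$ cover in the degeneration.
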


This result suggests a certain trend: as the degree approaches (asymptotically) 
the dimension, the birational geometry of the hypersurface tends to `rigidify'. 
This principle can be formulate precisely in the extreme case $d=N$, 
where the geometry becomes as `rigid' as it can be.

A \emph{Mori fiber space} is a normal $\Q$-factorial 
projective variety with terminal singularities,
equipped with a morphism of relative Picard number one
with connected fibers of positive dimension such that the anticanonical class is
relatively ample.
Examples of Mori fiber spaces are conic bundles and Del Pezzo fibrations. 
A Fano manifold with Picard number one can be regarded as a Mori 
fiber space over $\Spec \C$. 

\begin{thm}
\label{t:X_N-P^N}
Let $X = X_N \subset \P^N$ be any (smooth) hypersurface.
If $N \ge 4$, then every birational map from $X$ to a Mori fiber space $X'/S'$
is an isomorphism (and in fact a projective equivalence). 
In particular, $X$ is nonrational.
\end{thm}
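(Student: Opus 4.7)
The plan is to invoke the method of maximal singularities. Because $N \ge 4$, the Lefschetz hyperplane theorem gives $\Pic(X) = \Z[H]$ with $H$ the hyperplane class, and adjunction gives $-K_X = H$, so $X$ is itself a Mori fiber space over $\Spec\C$ with Picard number one. Suppose $\phi \colon X \dashrightarrow X'$ is a birational map to a Mori fiber space $X'/S'$. Pull back from $X'$ a general mobile linear system (the pullback from $S'$ of a very ample system if $\dim S' > 0$, a very ample system on $X'$ otherwise) and let $\cM$ be its strict transform on $X$. Since $\Pic(X)$ has rank one, $\cM \subset |nH|$ for some positive integer $n$, so $\cM \sim_\Q -nK_X$.

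The Noether-Fano inequality then asserts that if $\phi$ is not an isomorphism, the pair $(X, \frac{1}{n}\cM)$ is not canonical: there exists a divisor $E$ over $X$ with $\ord_E(\cM) > n \cdot a_E(X)$. Hence it suffices to prove that $(X, \frac{1}{n}\cM)$ \emph{is} canonical. Once this is in hand, a standard rigidity argument forces $\phi$ to extend to an isomorphism of Mori fiber spaces, and because $H$ is a very ample generator of $\Pic(X)$ that $\phi$ must match with the analogous generator on $X'$, the isomorphism is realized as a projective equivalence in $\P^N$. Nonrationality follows immediately, since $\P^{N-1}$ is not projectively equivalent to any smooth $X_N$.

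Establishing canonicity is the technical core. The naive intersection-theoretic bound $(\mult_Z \cM)^2 \le \cM^2 \cdot H^{N-3} = n^2 N$ at a point $Z$ only yields $\mult_Z \cM \le n\sqrt{N}$, which is insufficient. Sharper estimates exploit the hypersurface structure: at a point $p \in X$, expanding the defining equation as $f_1 + f_2 + \cdots + f_N$ in local affine coordinates produces hypertangent linear systems $\Lambda_k \subset |kH|$ whose general members have multiplicity $\ge k+1$ at $p$, constraining $\cM$ via iterated intersection with members of the $\Lambda_k$.

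The main obstacle, and the crux of \cite{dF13}, is globalizing these pointwise estimates to rule out \emph{every} potential non-canonical center $Z \subseteq X$ in \emph{every} codimension, uniformly in $N$. The classical inductive arguments on $\dim Z$ had been pushed through only for small values of $N$; the innovation of \cite{dF13} is a uniform approach via arc spaces $J_\infty(X)$ together with Mustata's formula expressing log canonical thresholds through codimensions of contact loci. This machinery provides, for each potential center $Z$, a multiplier ideal estimate strong enough to enforce canonicity at $Z$, thereby completing the argument.
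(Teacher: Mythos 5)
Your opening reduction is the same one the paper makes: Lefschetz and adjunction give $\Pic(X)=\Z[H]$ with $-K_X\sim H$, and the Noether--Fano inequality (Proposition~\ref{p:NF}) reduces everything to showing that the pair $(X,\tfrac1n\cM)$ is canonical, after which the isomorphism and the projective equivalence follow. The genuine gap is that you never establish canonicity, and the one concrete mechanism you propose for it would not work here. The hypertangent linear systems $\Lambda_k$ and the iterated intersections with their members are Pukhlikov's technique \cite{Puk98}; to run that argument one needs the successive hypertangent divisors to cut the putative non-canonical center in the expected codimension, which is precisely the ``local regularity'' hypothesis that holds only for suitably general hypersurfaces. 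That is exactly why the theorem remained open for \emph{arbitrary} smooth $X_N\subset\P^N$ until \cite{dF13}. Saying that the arc-space machinery of \cite{dF13} then ``globalizes these pointwise estimates'' inverts the logic: the work of \cite{dF13} does not repair the hypertangent method, it replaces it, and your final step is in effect a citation of the proof rather than an argument.

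Concretely, the chain your sketch is missing runs as follows. Positive-dimensional non-canonical centers are excluded not by arc spaces but by the elementary projection argument of Lemma~\ref{l:mult1}, which gives $\mult_C(D)\le r$ for every curve $C\subset X$; so any non-canonical center is a point $x$. One then cuts by $N-3$ general hyperplanes through $x$ to obtain a surface $Y$ on which the base scheme $B$ restricts to a zero-dimensional scheme; the Connectedness Theorem and Theorem~\ref{t:lct-e} give $e_x(B|_Y)>4r^2$, which contradicts the complete-intersection bound $Nr^2$ only when $N=4$. The new input of \cite{dF13} is a restriction statement for multiplier ideals (Theorem~\ref{t:restr}), proved via maximal divisorial sets in arc spaces and valid only under a homogeneity assumption on the valuation, so that one must first reduce to a $\C^*$-invariant situation by generic linear projection and flat degeneration to homogeneous ideals; this upgrades the estimate to $e_x(B|_Y)>4(N-3)r^2$ via Theorem~\ref{t:J-e}, and Nadel vanishing closes the argument. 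None of these steps appear in your proposal, and the tool you do name cannot substitute for them without genericity hypotheses that the theorem explicitly forbids.
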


We say that $X_N \subset \P^N$, for $N \ge 4$, is \emph{birationally superrigid}. 
In general, a Fano manifold $X$ of Picard number one 
is said to be \emph{birationally rigid}
if every birational map $\f$ from $X$ to a Mori fiber space $X'/S'$ 
is, up to an isomorphism, 
a birational automorphism of $X$; it is said to be \emph{birationally superrigid}
if any such $\f$ is an isomorphism.\footnote{This definition can 
be generalized to all Mori fiber spaces, see \cite{Cor95}.}

Theorem~\ref{t:X_N-P^N} has a long history, tracing back to the work of Fano
on quartic threefolds \cite{Fan07,Fan15}. Let $X = X_4 \subset \P^4$. Fano claimed that
$\Bir(X) = \Aut(X)$, a fact that alone suffices to show that $X$ is nonrational as
$\Aut(X)$ if finite and $\Bir(\P^3)$ is not. Fano's method is inspired to Noether's 
factorization of planar Cremona maps. The idea is to look at the
indeterminacy locus of a given birational self-map $\f \colon X \rat X$.
If $\f$ is not an isomorphism, then the base scheme $B \subset X$ 
of a linear system defining $\f$ must be `too singular' with respect to the
equations cutting out $B$ in $X$. As this is impossible, one concludes that
$\f$ is a regular automorphism. 

Fano's proof is incomplete. The difficulty that Fano had to face
is that, differently from the surface case where the multiplicities
of the base scheme are a strong enough invariant to quantify how `badly singular'
the map is, in higher dimension one needs to dig further into 
a resolution of singularities to extract the relevant information. 

The argument was eventually corrected and completed in \cite{IM71}. In their paper, 
Iskovskikh and Manin only look at the birational group $\Bir(X)$, 
but it soon became clear that the proof itself leads to the stronger conclusion that $X$ 
is birationally superrigid. In fact, the very definition of birational superrigidity
was originally motivated by their work.\footnote{Mori fiber spaces 
are the output of the minimal model program
for projective manifolds of negative Kodaira dimension. It is natural
to motivate the notion of birational rigidity also from this point of view:
a Mori fiber space is birationally
rigid (resp., superrigid) if, within its own birational class, 
it is the unique answer of the program
up to birational (resp., biregular) automorphisms
preserving the fibration.}

Following \cite{IM71}, significant work 
has been done throughout the years to extend this result to higher dimensions, 
starting from Pukhlikov who proved it first 
for $X_5 \subset \P^5$ \cite{Puk87},
and then in all dimensions under a suitable condition of `local regularity' on the equation
defining the hypersurface \cite{Puk98}. Some low dimensional cases were
established in \cite{Che00,dFEM03}, and the complete proof of Theorem~\ref{t:X_N-P^N} 
was finally given in \cite{dF13}. 

While in this paper we focus on smooth projective hypersurfaces, 
the birational rigidity problem has been extensively studied for many other Fano
varieties and Mori fiber spaces, especially in dimension 3. 
There is a large literature on the subject that is too vast to be included here. 
For further reading, a good place to start is \cite{CR00}.

The study of birational rigidity has also ties with other birational properties
of algebraic varieties such as unirationality and rational connectedness.
The work of Iskovskikh and Manin was originally motivated 
by the L\"uroth problem, which asked whether unirational varieties
are necessarily rational. It was known by work of Segre \cite{Seg60} that 
there are smooth quartic threefolds $X_4 \subset \P^4$ that are unirational, 
and it is easy to see that all smooth cubic threefolds $X_3 \subset \P^4$ are unirational. 
The results of \cite{IM71,CG72} gave the first counter-examples to the L\"uroth problem.

Birational rigidity also relates to stability properties. 
A recent theorem of Odaka and Okada \cite{OO} proves that 
any birationally superrigid Fano manifold of index 1 
is slope stable in the sense of Ross and Thomas \cite{RT06}.


\section{Cubic surfaces of Picard number one}

Before Fano's idea could be made work in dimension three, 
Segre found a clever way to apply Noether's method once more to dimension two. 
Cubic surfaces are certainly rational over the complex numbers, but
they may fail to be rational when the ground field is not algebraically closed. 
The method of Noether works perfectly well, in fact, 
to prove that every smooth projective cubic surface 
of Picard number one over a field $\k$ is nonrational \cite{Seg51}. 
Later, Manin observed that if the field is perfect then the proof can be adapted 
to show that if two such cubic surfaces are birational equivalent, then 
they a projectively equivalent \cite{Man66}.\footnote{The 
hypothesis in Manin's theorem that $\k$ be perfect 
can be removed, cf.\ \cite{KSC04}.}
For a thorough discussion of these results, see also \cite{KSC04}.

The theorems of Segre and Manin extend rather straightforwardly
to the following result, which implies that cubic surfaces of Picard number one
are \emph{birationally rigid} (over their ground field). 

\begin{thm}
\label{t:X_3-P^3}
Let $X_\k \subset \P^3_\k$ be a smooth cubic surface 
of Picard number one over a perfect field $\k$.
Suppose that there is a birational map $\f_\k \colon X_\k \rat X_\k'$ where
$X_\k'$ is either a Del Pezzo surface of Picard number one, or a conic bundle over a curve $S_\k'$.
Then $X'_\k$ is a smooth cubic surface 
of Picard number one, and there is a birational automorphism $\b_\k \in \Bir(X_\k)$
such that $\f_\k \o \b_\k \colon X_\k \to X'_\k$ is a projective equivalence. 
In particular, $X_\k$ is nonrational.
\end{thm}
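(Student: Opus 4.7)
The plan is to adapt the classical Noether--Fano method to the Galois-equivariant setting over the perfect field $\k$. Since $X_\k$ is a smooth cubic surface with Picard number one, $\Pic(X_\k) \otimes \Q$ is generated by the hyperplane class $H = -K_{X_\k}$. Choose a sufficiently general, very ample complete linear system on $X_\k'$ (pulled back from $S'_\k$ in the conic bundle case), and let $\cH$ denote its strict transform on $X_\k$; then $\cH$ is a movable linear system with $\cH \sim nH$ for some $n \in \Z_{>0}$.

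Suppose $\f_\k$ is not an isomorphism. The Noether--Fano inequality then produces a divisorial valuation $E$ over $X_\k$ with $\ord_E(\cH) > n \cdot a_E(X_\k)$, where $a_E$ denotes the discrepancy. By Galois invariance of $\cH$, every Galois conjugate of $E$ has the same property. Consider first the principal case in which $E$ is the exceptional divisor of the blow-up of a closed point $p$ of $X_{\ov\k}$, so that $\mult_p \cH > n$. Two general members $D_1, D_2 \in \cH$ satisfy $D_1 \cdot D_2 = 3n^2$ on $X_{\ov\k}$, and summing local intersection contributions over the Galois orbit of $p$ yields
\[
3 n^2 \;=\; D_1 \cdot D_2 \;\ge\; s \cdot m^2 \;>\; s \cdot n^2,
\]
where $s$ is the size of the Galois orbit and $m = \mult_p \cH$. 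Hence $s \in \{1, 2\}$, so either $p$ is $\k$-rational or $\{p, p'\}$ is a conjugate pair defined over a quadratic extension of $\k$.

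The crux is to construct a $\k$-defined birational involution $\iota_\k \in \Bir(X_\k)$ that strictly lowers the invariant $n$. For a $\k$-rational point $p$, the projection from $p$ exhibits $\Bl_p X_\k \to \P^2_\k$ as a degree-two cover; its Galois involution descends to a $\k$-defined $\iota_\k \in \Bir(X_\k)$, and a standard computation in $\Pic(\Bl_p X_\k)$ yields a new movable linear system of degree strictly less than $n$. For a conjugate pair $\{p, p'\}$, the line $\ov{pp'} \subset \P^3_\k$ is already $\k$-defined and meets $X_\k$ in a third, necessarily $\k$-rational, point $r$; one builds an analogous $\k$-defined untwisting involution from the cluster $\{p, p', r\}$ by suitably combining the projection involutions into a Galois-equivariant descent datum. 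Replacing $\f_\k$ by $\f_\k \o \iota_\k^{-1}$ and iterating, after finitely many steps the Noether--Fano inequality must fail; at that stage, the composite $\f_\k \o \b_\k$ with $\b_\k$ the product of the untwisting involutions is biregular. Since $X_\k$ and $X_\k'$ are then both anticanonically embedded as smooth cubic surfaces of Picard number one in $\P^3_\k$, this isomorphism is induced by a linear automorphism of the ambient $\P^3_\k$, i.e., a projective equivalence.

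The main obstacle lies in the construction and Galois descent of $\iota_\k$ when the maximal singularity occurs along a conjugate pair of points or, more generally, at an infinitely near point. For the infinitely near case, one must refine the Bezout estimate on successive blow-ups while carefully tracking the discrepancies $a_E$, in order to retain the constraint $s \le 2$. For the conjugate-pair case, an involution a priori defined only over a quadratic extension must be shown to descend to $\k$, and it is precisely here that the hypothesis that $\k$ is perfect enters the argument.
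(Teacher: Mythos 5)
Your overall strategy (Noether--Fano plus Galois-equivariant untwisting involutions, in the spirit of Segre--Manin) is the same as the paper's, but as written there are genuine gaps precisely at the points you flag as ``obstacles''. The most serious one is the conjugate-pair case. Your proposed construction --- taking the third, $\k$-rational point $r$ of the line $\ov{pp'}$ and ``combining the projection involutions into a Galois-equivariant descent datum'' --- does not produce the required involution: the Geiser-type involutions centered at $p$ and at $p'$ are each defined only over the quadratic extension, their composite is neither an involution nor Galois-invariant (the Galois conjugate of $\iota_p\circ\iota_{p'}$ is $\iota_{p'}\circ\iota_p$), and an involution centered at the $\k$-point $r$ does nothing to lower the multiplicities at $p$ and $p'$, which is what the untwisting must achieve. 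The correct construction, used in the paper, is to take the Galois-stable linear system $|\O_X(2)\otimes\fm_p^2\otimes\fm_{p'}^2|$: it realizes the blow-up of $X$ at $\{p,p'\}$ as a double cover of a smooth quadric in $\P^3$, and the deck involution of this cover is automatically defined over $\k$ because the linear system is; no separate descent argument is needed. Without this (or an equivalent) construction, the degree-lowering step, and hence the termination of the untwisting process, is unproved in exactly the case where the non-closed field matters.

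Second, the ``infinitely near'' case you defer is not an additional case requiring a refined Bezout estimate: on a smooth surface, $(X,\tfrac 1n\cH)$ is canonical if and only if $\mult_x\cH\le n$ at every closed point $x\in X$ (by the proximity inequalities, multiplicities at infinitely near points never exceed those at the points below them), so a maximal singularity is always realized by an actual point of multiplicity $>n$. This fact is what the paper uses to split the proof into exactly two cases; your proof is incomplete until you either invoke or prove it. Finally, your endgame is circular as stated: you assert that once Noether--Fano fails both $X_\k$ and $X'_\k$ are anticanonically embedded cubic surfaces of Picard number one, but the identification of $X'_\k$ (ruling out the conic bundle target, i.e.\ showing $S'_\k=\Spec\k$, and showing $\f^{-1}$ is a morphism which must be an isomorphism for Picard-number reasons) is part of the conclusion; it is obtained in the paper by the discrepancy comparison and the Negativity Lemma in the canonical case, and some such argument must be supplied rather than assumed. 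The Eckardt-point possibility in the $\k$-rational case (needed to know the chosen point is genuinely a center of indeterminacy after untwisting) should also be addressed.
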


\begin{proof}
Fix an integer $r' \ge 1$ and a divisor $A'_\k$ on $X'_\k$, given
by the pullback of a very ample divisor on $S'_\k$,
such that $-r'K_{X'_\k} + A'_\k$ is very ample.
Here we set $S'_\k = \Spec\k$ and $A'_\k = 0$ if $X_\k'$ is a Del Pezzo surface of Picard number one.

Since $X_\k$ has Picard number one, its Picard group is generated
by the hyperplane class, which is linearly equivalent to
$-K_{X_\k}$. Then there is a positive integer $r$ such that
\[
(\f_\k)_*^{-1}(-r'K_{X'_\k} + A'_\k) \sim -rK_{X_\k}.
\]

Let $\ov\k$ be the algebraic closure of $\k$,
and denote $X = X_{\ov\k}$, $X' = X'_{\ov\k}$, $S' = S'_{\ov\k}$, $A' = A'_{\ov\k}$ and
$\f = \f_{\ov\k}$. Note that $A'$ is zero if $\dim S' = 0$, and
is the pullback of a very ample divisor on $S'$ if $\dim S' = 1$.
Let $D' \in |-r'K_{X'} + A'|$ be a general element, and let 
\[
D = \f^{-1}_*D' \in |-rK_X|.
\]

We split the proof in two cases.

\begin{case}
Assume that $\mult_x(D) > r$ for some $x \in X$.
\end{case}

The idea is to use these points of high multiplicity to construct a suitable
birational involution of $X$ (defined over $\k$) that, 
pre-composed to $\f$, untwists the map.
This part of the proof is the same as in
the proof of Manin's theorem, and we only sketch it.
The construction is also explained 
in \cite{KSC04}, to which we refer for more details. 

The Galois group of $\ov\k$ over $\k$ acts on the base points of $\f$ 
and preserves the multiplicities of $D$ at these points. Since
$D$ belongs to a linear system with zero-dimensional base locus and
$\deg D = 3r$ (as a cycle in $\P^3)$, there are at most two points 
at which $D$ has multiplicity larger than $r$, and the union of these points
is preserved by the Galois action.
If there is only one point $x \in X$ (not counting infinitely near ones), 
then $x$ is defined over $\k$. 
Otherwise, we have two distinct points $x,y$ on $X$ whose union 
$\{x,y\} \subset X$ is defined over $\k$. 

In the first case, consider the rational map $X \rat \P^2$ given by 
the linear system $|\O_X(1) \otimes \fm_x|$ (i.e., 
induced by the linear projection $\P^3 \rat \P^2$ with center $x$). 
The blow-up $g \colon \~X \to X$ of $X$ at $x$ resolves the indeterminacy of the map, 
and we get a double cover $h \colon \~X \to \P^2$. The Galois group of this cover
is generated by an involution $\~\a_1$ of $\~X$, which descends to a birational
involution $\a_1$ of $X$. 
In the second case, consider the map $X \rat \P^3$ given by 
the linear system $|\O_X(2) \otimes \fm_x^2 \otimes \fm_y^2|$. 
In this case, we obtain a double cover $h \colon \~X \to Q \subset \P^3$ where 
now $g \colon \~X \to X$ is the blow-up of $X$ at $\{x,y\}$ and $Q$ is
a smooth quadric surface. As before, we denote by $\~\a_1$ the Galois involution 
of the cover and by $\a_1$ the birational involution induced on $X$. 
In both cases, $\a_1$ is defined over $\k$. Therefore the composition 
\[
\f_1 = \f \o \a_1 \colon X \rat X'
\]
is defined over $\k$ and hence is given by a linear system in $|-r_1K_X|$ for some $r_1$.
A point $x$ with $\mult_x(D) > r$ cannot be an Eckardt point, and thus
is a center of indeterminacy for $\f_1$. 

In either case, we have $r_1 < r$. 
To see this, let $E$ be the exceptional divisor of $g \colon \~X \to X$,
and let $L$ be the pullback to $\~X$ of the hyperplane class
of $\P^2$ (resp., of $Q \subset \P^3$) by $h$. Note that $L \sim g^*(-K_X) - E$
by construction, and $g_*\~\a_1{}_*E \sim -sK_X$ for some $s \ge 1$
since it is supported on a nonempty curve (by Zariski's Main Theorem)
that is defined over $\k$.  
If $m$ is the multiplicity of $D$ at $x$ (and hence at $y$ in the second case)
and $\~D$ is the proper transform of $D$ on $\~X$, then 
$\~D  + (m-r)E \sim rL$.
Applying $(\~\a_1)_*$ to this divisor and pushing down to $X$, we obtain 
${\a_1}_*D \sim -r_1K_X$ where $r_1= r - (m-r)s < r$ since $m > r$.
Therefore, this operation lowers the degree of the equations defining the map.

Let $D_1 = {\f_1}_*^{-1}D' \in |-r_1K_X|$.
If $\mult_x(D_1) > r_1$ for some $x \in X$, then we proceed as before
to construct a new involution $\a_2$, and proceed from there.
Since the degree decreases each time, 
this process stops after finitely many steps.
It stops precisely when, letting
\[
\f_i = \f \o \a_1 \o \dots \o \a_i \colon X \rat X'
\]
and $D_i = {\f_i}_*^{-1}D' \in |-r_iK_X|$, we have 
$\mult_x(D_i) \le r_i$ for every $x \in X$.
Note that $\f_i$ is defined over $\k$. 
Then, replacing $\f$ by $\f_i$, we reduce to the next case.

\begin{case}
Assume that $\mult_x(D) \le r$ for every $x \in X$.
\end{case}

Taking a sequence of blow-ups, we obtain a resolution of indeterminacy
\[
\xymatrix{
& Y \ar[dl]_p \ar[dr]^q & \\
X \ar@{-->}[rr]^\f && X' 
}
\]
with $Y$ smooth.
Write
\begin{align*}
K_Y + \tfrac 1{r'} D_Y 
&= p^*(K_X + \tfrac 1{r'}D) + E' \\ 
&= q^*(K_{X'} + \tfrac 1{r'}D') + F'
\end{align*}
where $E'$ is $p$-exceptional, $F'$ is $q$-exceptional,
and $D_Y = p_*^{-1}D = q_*^{-1}D'$.
Since $X'$ is smooth and $D'$ is a general hyperplane section, 
we have $F' \ge 0$ and $\Supp(F') = \Ex(q)$.
Note that $K_{X'} + \tfrac 1{r'}D'$ is nef. 
Intersecting with the image in $Y$ of a general complete intersection
curve $C \subset X$ we see that $(K_X + \tfrac 1{r'}D) \. C \ge 0$, 
and this implies that $r \ge r'$. 

Next, we write
\begin{align*}
K_Y + \tfrac 1{r} D_Y 
&= p^*(K_X + \tfrac 1{r}D) + E \\ 
&= q^*(K_{X'} + \tfrac 1{r}D') + F
\end{align*}
where, again, $E$ is $p$-exceptional and $F$ is $q$-exceptional. 
The fact that $\mult_x(D) \le r$ for all $x \in X$ implies that $E \ge 0$. 
Intersecting this time with the image in $Y$ of a general complete intersection
curve $C'$ in a general fiber of $X' \to S'$, 
we get $(K_{X'} + \tfrac 1{r}D') \. C' \ge 0$, 
and therefore $r = r'$. Note also that $E = E'$ and $F = F'$. 

The difference $E - F$ is numerically equivalent to the pullback of $A'$.
In particular, $E-F$ is nef over $X$ and is numerically trivial over $X'$. 
Since $p_*(E-F) \le 0$, the Negativity Lemma, applied to $p$, implies that $E \le F$. 
Similarly, 
since $q_*(E-F) \ge 0$, the Negativity Lemma, applied to $q$, implies that $E \ge F$. 
Therefore $E = F$. This means that $A'$ is numerically trivial,
and hence $S' = \Spec\ov\k$. 
Furthermore, we have $\Ex(q) \subset \Ex(p)$, and therefore
Zariski's Main Theorem implies that the inverse map
\[
\s = \f^{-1} \colon X' \rat X
\]
is a morphism. 

To conclude, just observe that if $S'_\k = \Spec \k$ then $X'_\k$ must have Picard number one. 
But $\s$, being the inverse of $\f$, is defined over $\k$. 
It follows that $\s$ is an isomorphism, as otherwise it would increase the 
Picard number. Therefore $X'_\k$ is a smooth cubic surface
of Picard number one. Since we can assume without loss of generality to have picked $r' = 1$
to start with, we conclude that, after the reduction step performed in
Case~1, $\f$ is a projective equivalence defined over $\k$.
The second assertion of the theorem follows by taking
$\b_\k$ given by $\a_1 \o \dots \o \a_i$ over $\k$.
\end{proof}

\section{The method of maximal singularities}

The proof of Theorem~\ref{t:X_3-P^3} already shows the main features
of the \emph{method of maximal singularities}. 

The reduction performed in Case~1 of the proof is 
clearly inspired by Noether's untwisting process used to
factorize planar Cremona maps into quadratic transformations
\cite{Noe72,Cas01}. 
This procedure has been generalized in higher dimensions to build 
the \emph{Sarkisov's program}, which provides
a way of factorize birational maps between Mori fiber spaces
into \emph{elementary links}, see \cite{Cor95,HM13}.

The discussion of Case~2 of the proof generalizes
to the following property, due to \cite{IM71,Cor95}.\footnote{For a comparison, 
one should notice how similar the arguments are.
We decided to use the same exact wording when the argument is the same 
so that the differences will stand out.}

\begin{prop}[Noether--Fano Inequality]
\label{p:NF}
Let $\f \colon X \rat X'$ be a birational map 
from a Fano manifold $X$ of Picard number one to a Mori fiber space $X'/S'$. 
Fix a sufficiently divisible integer $r'$ and a sufficiently ample divisor
on $S'$ such that if $A'$ is the pullback of this divisor to $X'$ then
$-r'K_{X'} + A'$ is a very ample divisor
(if $S' = \Spec\C$ then take $A' = 0$).
Let $r$ be the positive rational number
such that $\f_*^{-1}(-r'K_{X'}+A) \sim_\Q -rK_X$, and
let $B \subset X$ be the base scheme of the linear system 
$\f_*^{-1}|-r'K_{X'}+A| \subset |-rK_X|$. 
If the pair $(X,\tfrac 1r B)$ is canonical, then $r = r'$ and $\f$ is an isomorphism.
\end{prop}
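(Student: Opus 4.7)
The proof follows almost verbatim Case~2 of the argument used to establish Theorem~\ref{t:X_3-P^3}, with the canonical hypothesis on the pair $(X, \tfrac{1}{r}B)$ taking over the role that was played there by the pointwise bound $\mult_x(D) \le r$. The plan is to compare log discrepancies on a common resolution at the two natural coefficients $1/r'$ and $1/r$, and then extract the geometric conclusion via the Negativity Lemma.

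First I would fix a general member $D' \in |-r'K_{X'}+A'|$, set $D = \f_*^{-1}D' \sim_\Q -rK_X$, and take a smooth common resolution $p \colon Y \to X$, $q \colon Y \to X'$ with $D_Y = p_*^{-1}D = q_*^{-1}D'$. Then I would write the two pairs of identities
\begin{align*}
K_Y + \tfrac 1{r'} D_Y &= p^*(K_X + \tfrac 1{r'}D) + E' = q^*(K_{X'} + \tfrac 1{r'}D') + F', \\
K_Y + \tfrac 1{r} D_Y &= p^*(K_X + \tfrac 1{r}D) + E = q^*(K_{X'} + \tfrac 1{r}D') + F,
\end{align*}
with $E,E'$ $p$-exceptional and $F,F'$ $q$-exceptional. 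The generality of $D'$ on the smooth (or at worst terminal) locus of $X'$ gives $F' \ge 0$ with $\Supp(F') = \Ex(q)$; the nefness of $K_{X'} + \tfrac 1{r'}D' \equiv \tfrac 1{r'}A'$, tested against the image in $Y$ of a general complete intersection curve $C \subset X$, then yields $r \ge r'$ exactly as in the cubic case.

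The canonical hypothesis on $(X, \tfrac 1r B)$ enters as the condition $E \ge 0$; by Bertini, this is equivalent to the corresponding statement for a general $D$ in the system. Pairing the second identity with the image in $Y$ of a general complete intersection curve $C'$ inside a general fiber of $X'/S'$, and using $A' \cdot C' = 0$ together with $-K_{X'} \cdot C' > 0$, produces $(K_{X'} + \tfrac 1r D') \cdot C' \ge 0$, forcing $r \le r'$ and hence $r = r'$. In particular $E = E'$, $F = F'$, and $E - F \equiv q^*(\tfrac 1r A')$.

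To conclude, $E - F$ is $p$-nef (as the pullback of a nef class from $X'$) with $p_*(E - F) = -p_*F \le 0$, so the Negativity Lemma applied to $p$ gives $E \le F$; symmetrically $E - F$ is $q$-trivial with $q_*(E - F) = q_*E \ge 0$, and the Negativity Lemma applied to $q$ gives $E \ge F$. Hence $E = F$, which forces $A' \equiv 0$ and therefore $\dim S' = 0$, and it gives $\Ex(q) \subset \Ex(p)$, so Zariski's Main Theorem ensures that $\f^{-1}$ is a morphism. Since both $X$ and $X'$ are $\Q$-factorial of Picard number one, the birational morphism $\f^{-1} \colon X' \to X$ cannot contract any divisor and must be an isomorphism. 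The most delicate point I anticipate is the step $E \ge 0$: unlike the transparent pointwise multiplicity input available for cubic surfaces, one must now invoke the definition of canonical singularities divisor-by-divisor on the resolution, which is precisely where the hypothesis on $(X, \tfrac 1r B)$ does its work.
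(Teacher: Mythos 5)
Your argument reproduces the paper's proof essentially word for word up through the conclusions $r=r'$, $E=F$, $A'\equiv 0$ (hence $S'=\Spec\C$ and $\rho(X')=1$) and $\Ex(q)\subset\Ex(p)$, but there is a genuine gap at the very end. The inference ``$\Ex(q)\subset\Ex(p)$, so Zariski's Main Theorem ensures that $\f^{-1}$ is a morphism'' is the \emph{surface} argument from Theorem~\ref{t:X_3-P^3}, and it does not survive in higher dimension. If $\f^{-1}$ fails to be a morphism at a point $x'\in X'$, ZMT only produces a curve $C\subset q^{-1}(x')$ that is not contracted by $p$; knowing $C\subset\Ex(q)\subset\Ex(p)$ does not force $p$ to contract $C$, because for $\dim X\ge 3$ the components of $\Ex(p)$ may dominate positive-dimensional centers (on surfaces $\Ex(p)$ lies over finitely many points, which is exactly why the step is legitimate there). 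Indeed the implication you invoke is false as a general statement: for a standard threefold flop of a curve with normal bundle $\cO(-1)\oplus\cO(-1)$, the common resolution has $\Ex(p)=\Ex(q)$ equal to a single divisor mapped by both $p$ and $q$ onto curves, yet neither $\f$ nor $\f^{-1}$ is a morphism. All that $\Ex(q)\subset\Ex(p)$ gives you is that $\f^{-1}$ contracts no divisors; you still have to exclude a small (isomorphism in codimension one) modification, and ZMT cannot do that.

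This is precisely where the paper departs from the cubic-surface proof (its footnote stresses that identical wording is used exactly where the arguments coincide, so that the differences stand out): it computes $\rho(Y)$ from both sides, using $\Q$-factoriality and $\rho(X)=\rho(X')=1$, to upgrade the inclusion to $\Ex(p)=\Ex(q)$, so that $p^*D-q^*D'$ is exceptional for both morphisms, and then uses the ampleness of $D$ to conclude that $\f$ is a morphism and finally an isomorphism. Equivalently, once you know $\f$ is an isomorphism in codimension one, you can finish by noting that it identifies $\tfrac 1r D\sim_\Q -K_X$ with $\tfrac 1r D'\sim_\Q -K_{X'}$ and hence the two anticanonical rings, so $\f$ is an isomorphism; some argument of this kind must replace your ZMT step. (A minor additional point: deducing $E\ge 0$ for a general member $D$ from canonicity of $(X,\tfrac 1r B)$ also uses $r\ge r'\ge 1$, so that the moving part enters with coefficient at most one; it is not purely a Bertini statement.)
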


\begin{proof}
Let
\[
\xymatrix{
& Y \ar[dl]_p \ar[dr]^q & \\
X \ar@{-->}[rr]^\f && X' 
}
\]
be a resolution of singularities. Note that 
the exceptional loci $\Ex(p)$ and $\Ex(q)$ have pure codimension 1.
Fix a general element $D' \in |-r'K_{X'}+A|$ and let $D_Y = q_*^{-1}D$ (which is the same
as $q^*D$) and $D = p_*D_Y$. Note that $D_Y = p_*^{-1}D$ and $D =\f_*^{-1}D' \in |-rK_X|$.

Write
\begin{align*}
K_Y + \tfrac 1{r'} D_Y 
&= p^*(K_X + \tfrac 1{r'}D) + E' \\ 
&= q^*(K_{X'} + \tfrac 1{r'}D') + F'
\end{align*}
where $E'$ is $p$-exceptional and $F'$ is $q$-exceptional. 
Since $X'$ has terminal singularities and $D'$ is a general hyperplane section, 
we have $F' \ge 0$ and $\Supp(F') = \Ex(q)$.
Note that $K_{X'} + \tfrac 1{r'}D'$ is numerically 
equivalent to the pullback of $A'$, which is nef. 
Intersecting with the image in $Y$ of a general complete intersection
curve $C \subset X$ we see that $(K_X + \tfrac 1{r'}D) \. C \ge 0$, 
and this implies that $r \ge r'$. 

Next, we write
\begin{align*}
K_Y + \tfrac 1{r} D_Y 
&= p^*(K_X + \tfrac 1{r}D) + E \\ 
&= q^*(K_{X'} + \tfrac 1{r}D') + F
\end{align*}
where, again, $E$ is $p$-exceptional and $F$ is $q$-exceptional. 
Assume that the pair $(X,\tfrac 1r B)$ is canonical. 
Since $D$ is defined by a general element of the linear system 
of divisors cutting out $B$, 
and $r \ge 1$, it follows that
$(X,\tfrac 1r D)$ is canonical. This means that $E \ge 0$. 
Intersecting this time with the image in $Y$ of a general complete intersection
curve $C'$ in a general fiber of $X' \to S'$, 
we get $(K_{X'} + \tfrac 1{r}D') \. C' \ge 0$, 
and therefore $r = r'$. Note also that $E = E'$ and $F = F'$. 

The difference $E - F$ is numerically equivalent to the pullback of $A'$.
In particular, $E-F$ is nef over $X$ and is numerically trivial over $X'$. 
Since $p_*(E-F) \le 0$, the Negativity Lemma, applied to $p$, implies that $E \le F$. 
Similarly, 
since $q_*(E-F) \ge 0$, the Negativity Lemma, applied to $q$, implies that $E \ge F$. 
Therefore $E = F$. This means that $A'$ is numerically trivial, and hence
$S' = \Spec\C$ and $X'$ is a Fano variety of Picard number one. 
Furthermore, we have $\Ex(q) \subset \Ex(p)$, 

By computing the Picard number of $Y$ in two ways (from $X$ and from $X'$),
we conclude that $\Ex(p)= \Ex(q)$, and thus the difference 
$p^*D - q^*D'$ is $q$-exceptional. Since $D$ is ample, 
this implies that $\f$ is a morphism. 
Since $X$ and $X'$ have the same Picard number and $X'$ is normal, 
it follows that $\f$ is an isomorphism. 
\end{proof}

\begin{rmk}
A more general version of this property gives a criterion
for a birational map $\f \colon X \rat X'$ between two Mori fiber spaces
$X/S$ and $X'/S'$ to be an isomorphism
preserving the fibration. Given the correct statement, 
the proof easily adapts to this setting. For more details, see 
\cite{Cor95,dF02} (the proof in \cite{Cor95} uses, towards the end, 
some results from the minimal model program;
this is replaced in \cite{dF02} by an easy computation of
Picard numbers similar to the one done at the end of the proof of the proposition).
\end{rmk}

The idea at this point is to relate this condition on the singularities of
the pair $(X,\tfrac 1r B)$ to intersection theoretic invariants 
such as multiplicities, which can be easily related to 
the degrees of the equations involved when, say, $X$ is
a hypersurface in a projective space. 

If $X$ is a smooth surface and $D$ is an effective divisor,
then $(X,\tfrac 1r D)$ is canonical if and only if
$\mult_x(D) \le r$ for every $x \in X$.
In higher dimension, however, being canonical
cannot be characterized by a simple condition on multiplicities.

The way \cite{IM71} deals with this problem is by carefully keeping track 
of all valuations and discrepancies along the exceptional
divisors appearing on a resolution of singularities.
The combinatorics of the whole resolution, encoded in a suitable graph
which remembers all centers of blow-up, becomes an essential 
ingredient of the computation. 
This approach has been used to study birational rigidity problems for several years
until Corti proposed in \cite{Cor00} an alternative approach
based on the Shokurov--Koll\'ar Connectedness Theorem. 
Corti's approach has led to a significant simplification of the proof
of Iskovskikh--Manin's theorem, and has provided a starting point 
for setting up the proof of Theorem~\ref{t:X_N-P^N}.

\section{Cutting down the base locus}

Let $X = X_4 \subset \P^4$, and suppose that $\f \colon X \rat X'$ is a birational
map to a Mori fiber space $X'/S'$ which is not
an isomorphism. Using the same notation as in Proposition~\ref{p:NF}, 
it follows that the pair $(X,\tfrac 1r B)$ is not canonical. 
The following property, due to \cite{Puk98}, 
implies that the pair is canonical away from a finite set.

\begin{lem}
\label{l:mult1}
Let $X \subset \P^N$ be a smooth hypersurface, and let $D \in |\O_X(r)|$. 
Then $\mult_C(D) \le r$ for every irreducible curve $C \subset X$.
\end{lem}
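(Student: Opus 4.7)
The plan is to restrict a polynomial defining $D$ to a transverse line through a general point of $C$ and bound the multiplicity by counting zeros on $\P^1$. First I would lift $D$ to a hypersurface of $\P^N$: from the short exact sequence $0 \to \O_{\P^N}(r-d) \to \O_{\P^N}(r) \to \O_X(r) \to 0$ and the vanishing $H^1(\P^N, \O(r-d)) = 0$, the restriction $H^0(\P^N, \O(r)) \to H^0(X, \O_X(r))$ is surjective, so $D$ is cut out on $X$ by some homogeneous polynomial $g$ of degree $r$.

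Set $m = \mult_C(D)$, write $f$ for the equation of $X$, and $I_C$ for the homogeneous ideal of $C$ in $\P^N$. The next step is to rephrase the multiplicity condition algebraically. Since $g|_X = (g + fh)|_X$ for any polynomial $h$ of degree $r-d$, the divisor $D$ is represented equally well by any polynomial in the coset $g + f \cdot (\text{polynomials of degree } r-d)$. The condition $g|_X \in I_{C,X}^m$ (which expresses $\mult_C(D) \ge m$) translates, via $I_{C,X} = I_C \cdot \O_X$, to $g \in I_C^m + (f)$ in the homogeneous coordinate ring. Both ideals are graded, so taking the degree-$r$ piece yields a representative $g' = g - fh \in (I_C^m)_r$ (with $h$ of degree $r-d$, or $h = 0$ when $r < d$); that is, $g'$ is a homogeneous polynomial of degree $r$ vanishing to order at least $m$ along $C$ in $\P^N$. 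The degenerate case $g' = 0$ gives $D = 0$ and is trivial, so I may assume $g' \ne 0$.

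Finally I would pick a general smooth point $p \in C$ and a general line $\ell \subset \P^N$ through $p$. For generic $\ell$, the line is transverse to $C$ at $p$ and is not contained in $\{g' = 0\}$, so $g'|_\ell$ is a nonzero section of $\O_\ell(r) \cong \O_{\P^1}(r)$, namely a polynomial of degree $r$ in one variable. Transversality forces $g'|_\ell$ to vanish at $p$ to order at least $\mult_C(g') \ge m$, and since a nonzero degree-$r$ polynomial on $\P^1$ has at most $r$ zeros counted with multiplicity, one concludes $m \le r$.

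The step I expect to require most care is the passage from the condition $g|_X \in I_{C,X}^m$ to a global decomposition $g = g' + fh$ with $g'$ homogeneous of degree exactly $r$ and in $I_C^m$; this is clean because all the ideals involved are homogeneous, but it is the spot where degrees must be tracked carefully.
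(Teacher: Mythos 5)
The final step of your argument (a nonzero degree-$r$ form on $\P^N$ vanishing to order $\ge m$ along $C$ restricts to a general line through a general point of $C$ with a zero of order $\ge m$, hence $m\le r$) is fine, as is the surjectivity of $H^0(\P^N,\O(r))\to H^0(X,\O_X(r))$. The gap is the step you yourself flag: producing a homogeneous representative $g'=g-fh$ of degree $r$ with $\mult_C(g')\ge m$. First, $\mult_C(D)\ge m$ is the condition that a local equation of $D$ lies in $\fm_{\eta_C}^m\subset \O_{X,\eta_C}$, i.e.\ in the $m$-th \emph{symbolic} power of $I_{C,X}$; since $\O_{X,\eta_C}$ has dimension $N-2\ge 2$, this is in general strictly weaker than membership in the ordinary power $I_{C,X}^m$ (symbolic and ordinary powers of curve ideals differ already for space curves), so the translation ``$\mult_C(D)\ge m$ iff $g|_X\in I_{C,X}^m$'' is not available in the direction you need. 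Second, even after replacing ordinary by symbolic powers, the condition is local at the generic point of $C$: it says $g=a+fb$ with $\ord_{\eta_C}(a)\ge m$ and $b$ only a rational function regular along a dense open subset of $C$. Clearing denominators gives $sg\in I_C^{(m)}+(f)$ for some form $s$ not vanishing identically on $C$, and from this one cannot extract a degree-$r$ element of the coset $g+f\cdot H^0(\O(r-d))$ lying in $I_C^{(m)}$; the graded bookkeeping does not repair a statement that was never a global ideal membership to begin with. The issue is most visible when $r<d$: there the lift of $D$ is the single polynomial $g$, and your claim amounts to $\mult_C(\{g=0\})=\mult_C(D)$, which fails locally in general (e.g.\ $f=z$, $g=z+y^2$, $C=\{y=z=0\}$: order $1$ upstairs, order $2$ on $X$). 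Whether smoothness of $X$ and the degree constraints exclude such behavior is essentially the content of the lemma, so the reduction assumes the hard point rather than proving it.

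For comparison, the paper's proof never lifts $D$ to $\P^N$. It projects from a general point of $\P^N$ to get a finite morphism $X\to\P^{N-1}$, writes the preimage of the image of $C$ as $C\cup C'$ with $C'$ of degree $(d-1)\deg C$, and uses that for a general projection the ramification divisor meets $C$ transversely exactly at the $(d-1)\deg C$ points of $C\cap C'$; if $\mult_C(D)>r$, then $D|_{C'}$ would have degree larger than $r(d-1)\deg C=\deg(D|_{C'})$, a contradiction. Working with the residual curve $C'$ is the device that substitutes for the (unavailable) multiplicity-preserving lift; some idea of this kind is needed to close the gap in your approach.
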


\begin{proof}
Let $g \colon X \to \P^{N-1}$ be the morphism induced by
projecting from a general point of $\P^N$, and write $g^{-1}(g(C)) = C \cup C'$.
The residual component $C'$ has degree $(d-1)\deg(C)$ where $d = \deg(X)$. 
Taking a sufficiently general projection, the ramification divisor 
intersects $C$ transversely at $(d-1)\deg(C)$ distinct points $x_i$, 
which are exactly the points of intersection $C \cap C'$. 
If $\mult_C(D) > r$, then we get
\[
\deg(D|_{C'}) \ge \sum_i \mult_{x_i}(D|_{C'}) > r (d-1)\deg(C) = \deg(D|_{C'}),
\]
a contradiction.
\end{proof}

Therefore there is a 
prime exceptional divisor $E$ on some resolution $f \colon \~X \to X$,
lying over a point $x \in X$, such that 
\[
\tfrac 1r\. \ord_E(B) > \ord_E(K_{\~X/X}).
\]
where $K_{\~X/X}$ is the relative canonical divisor.
In the left hand side we regard $\ord_E$ as a valuation on the function field of $X$, 
and $\ord_E(B) = \ord_E(\I_B)$ denotes the smallest valuation of an element
of the stalk of the ideal sheaf $\I_B \subset \O_X$ of $B$ at the center of valuation $x$. 

Corti's idea, at this point, is to take a general hyperplane section
$Y \subset X$ through $x$. This has two effects:
\begin{enumerate}
\item
the restriction $B|_Y$ of the base scheme $B$ is a zero-dimensional scheme, and
\item
the pair $(Y,\tfrac 1r B|_Y)$ is not log canonical. 
\end{enumerate}
The first assertion is clear. 
Let us discuss why~(b) is true. 
Suppose for a moment that the proper transform $\~Y \subset \~X$ of $Y$ intersects 
(transversely) $E$, and let $F$ be an irreducible component of $E|_{\~Y}$. 
By adjunction, we have 
\[
K_{\~Y/Y} = (K_{\~X/X} + \~Y - f^*Y)|_{\~Y}. 
\]
Since $\ord_E(Y) \ge 1$ and $\ord_F(B|_Y) \ge \ord_E(B)$, we have
\[
\tfrac 1r\. \ord_F(B|_Y) > \ord_F(K_{\~Y/Y}) + 1,
\]
and this implies~(b). 
In general, we cannot expect that $\~Y$ intersects $E$. 
Nevertheless, the Connectedness Theorem tells us that,
after possibly passing to a higher resolution, $\~Y$ will intersect some 
other prime divisor $E'$ over $X$, with center $x$, such that 
\[
\tfrac 1r\.\ord_{E'}(B) + \ord_{E'}(Y) > \ord_{E'}(K_{\~X/X}) + 1.
\]
Then the same computation using the adjunction formula produces a divisor $F$
over $Y$ satisfying the previous inequality.

The property that $(Y,\tfrac 1r B|_Y)$ is not log canonical can be
equivalently formulated in terms of log canonical thresholds.
It says that the log canonical threshold $c = \lct(Y,B|_Y)$ of the pair $(Y,B|_Y)$
satisfies the inequality
\[
c < 1/r.
\]
The advantage now is that we know how to compare log canonical thresholds
to multiplicities. The following result is due to \cite{Cor00,dFEM04}.

\begin{thm}
\label{t:lct-e}
Let $V$ be a smooth variety of dimension $n$, let
$Z \subset V$ be a scheme supported at a closed point $x \in V$, and let $c = \lct(V,Z)$. 
Then $Z$ has multiplicity
\[
e_x(Z) \ge (n/c)^n.
\]
\end{thm}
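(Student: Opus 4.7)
The plan is to reduce to the case of a monomial $\fm$-primary ideal via a Gr\"obner degeneration and then finish with a convex-geometric estimate on the Newton polyhedron. Completing at $x$, I may assume $V = \Spec R$ with $R = \C[[x_1,\dots,x_n]]$ a regular local ring, and $\I_Z = \fa \subset R$ an $\fm$-primary ideal, so that $e_x(Z) = e(\fa)$ is the Hilbert--Samuel multiplicity. Choosing a generic weight vector, I form the flat one-parameter Gr\"obner degeneration of $\fa$ whose special fiber is the monomial ideal $\fa_0 = \ini(\fa)$. Flatness forces $e(\fa_0) = e(\fa)$ (the colengths of $\fa^k$ are constant in the family, so is the leading coefficient of the Hilbert function), and semicontinuity of log canonical thresholds in flat families gives $\lct(\fa_0) \le \lct(\fa)$. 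Hence $(n/\lct(\fa_0))^n \ge (n/\lct(\fa))^n$, and it suffices to prove the inequality for $\fa_0$.

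For the monomial ideal $\fa_0$, let $P \subset \R^n_{\ge 0}$ denote its Newton polyhedron: the convex hull of the exponents of monomials in $\fa_0$ together with $\R^n_{\ge 0}$. Howald's formula identifies $\lct(\fa_0)$ with the largest $t$ such that $t(1,\dots,1) \in P$; setting $s := 1/\lct(\fa_0)$, the diagonal point $s(1,\dots,1)$ therefore lies on $\partial P$. Since $\fa_0$ is $\fm$-primary, the complement $\R^n_{\ge 0} \setminus P$ is bounded and admits the classical volume interpretation
\[
e(\fa_0) \,=\, n! \cdot \Vol\bigl(\R^n_{\ge 0} \setminus P\bigr).
\]

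Because $P$ is convex and upward-closed, it admits a supporting hyperplane at $s(1,\dots,1)$ with nonnegative normal $w = (w_1, \dots, w_n)$; boundedness of the complement forces $w_i > 0$ for every $i$. The simplex
\[
\Delta_w \,:=\, \{\, y \in \R^n_{\ge 0} : w \cdot y \le s(w_1 + \dots + w_n) \,\}
\]
is disjoint from $P$, hence contained in $\R^n_{\ge 0} \setminus P$, with volume $\bigl(s(w_1+\dots+w_n)\bigr)^n/(n!\, w_1 \cdots w_n)$. Thus
\[
e(\fa_0) \,\ge\, \frac{\bigl(s(w_1 + \dots + w_n)\bigr)^n}{w_1 \cdots w_n},
\]
and the AM--GM inequality $(w_1 + \dots + w_n)^n \ge n^n\, w_1 \cdots w_n$ finishes the estimate: $e(\fa_0) \ge (ns)^n = (n/\lct(\fa_0))^n$.

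The main obstacle in this plan is the semicontinuity of the log canonical threshold under the Gr\"obner degeneration. This can be handled either via the general semicontinuity theorem for log canonical thresholds in flat families of pairs, or by a direct argument using multiplier ideals in families. The invariance of the multiplicity in the flat family is a straightforward Hilbert-function comparison, and the convex-geometric estimate that concludes the argument is clean once Howald's formula and the volume formula for monomial ideals are in hand.
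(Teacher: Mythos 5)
Your treatment of the monomial case is correct and is essentially the paper's own argument in a slightly different guise: the supporting hyperplane at the diagonal point $s(1,\dots,1)$ with intercepts $a_i$ is exactly the reduction to the ideal $(u_1^{a_1},\dots,u_n^{a_n})$, and your inscribed-simplex volume computation plus AM--GM is the same estimate. Moreover, the step you single out as the main obstacle, the semicontinuity $\lct(\fa_0)\le\lct(\fa)$ under the degeneration, is indeed available and standard. The genuine gap is in the step you call straightforward, namely ``flatness forces $e(\fa_0)=e(\fa)$.'' Flatness of the Gr\"obner family preserves the colength of $\fa$ itself, but it says nothing about colengths of powers: one only has $\fa_0^{\,k}=(\ini\fa)^k\subseteq\ini(\fa^k)$, so $\ell(R/\fa_0^{\,k})\ge\ell(R/\ini(\fa^k))=\ell(R/\fa^k)$, hence $e(\fa_0)\ge e(\fa)$, and this can be strict. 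For instance, $\fa=(x^2,\;xy+y^4)\subset\C[[x,y]]$ is a parameter ideal with $e(\fa)=\ell(R/\fa)=8$, while $-y^7=(x-y^3)(xy+y^4)-y\,x^2\in\fa$ shows that its initial ideal for any weight $w$ with $w_1<3w_2$ is $(x^2,xy,y^7)$, whose Newton polyhedron has covolume $9/2$, so $e(\fa_0)=9>8$; genericity of the weight does not repair this. Since your chain of inequalities produces a lower bound for $e(\fa_0)$, and $e(\fa_0)\ge e(\fa)$ points the wrong way, the degeneration step as written gives no bound on $e(\fa)$ at all.

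The argument can be repaired with the tools you already have by running the degeneration through colengths, which \emph{are} preserved, and passing to powers. For each $k\ge 1$ the ideal $\ini(\fa^k)$ is monomial, $\ell(R/\ini(\fa^k))=\ell(R/\fa^k)$, and semicontinuity gives $\lct(\ini(\fa^k))\le\lct(\fa^k)=c/k$. The complement of the Newton polyhedron of a monomial ideal is covered by the unit cubes $u+[0,1)^n$ over the standard monomials $u$, so its covolume is at most the colength; combining this with your simplex estimate applied to $\ini(\fa^k)$ yields
\[
\ell(R/\fa^k)\;\ge\;\Vol\bigl(\R^n_{\ge0}\setminus P_k\bigr)\;\ge\;\frac{1}{n!}\Bigl(\frac{nk}{c}\Bigr)^{\!n},
\]
where $P_k$ is the Newton polyhedron of $\ini(\fa^k)$. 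Multiplying by $n!/k^n$ and letting $k\to\infty$ gives $e(\fa)=\lim_k n!\,\ell(R/\fa^k)/k^n\ge(n/c)^n$, with the sharp constant intact. With this modification your proposal becomes a correct proof along the same lines as the paper's sketch.
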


For the purpose of establishing birational rigidity, one only needs the case $n=2$ of this theorem, 
which is the case first proved by Corti. The case $n=2$ can be deduced from a more general 
formula which can be easily proven by induction on the number of blow-ups
needed to produce a log resolution. Here we sketch the proof in all dimension 
which, although perhaps less direct, has the advantage of explaining the nature of the result
as a manifestation of the classical inequality
between arithmetic mean and geometric mean. 

\begin{proof}[Sketch of the proof of Theorem~\ref{t:lct-e}]
The proof uses a flat degeneration to monomial
ideals. It is easy to prove the theorem in this case. 
If $\fa \subset \C[u_1,\dots,u_n]$ is a $(u_1,\dots,u_n)$-primary monomial ideal
then the log canonical threshold $c$ can be computed directly 
from the Newton polyhedron. This allows to reduce to the case in which
$\fa = (u_1^{a_1},\dots,u_n^{a_n})$, where
the log canonical threshold is equal to $\sum 1/a_i$ and the
Samuel multiplicity is equal to $\prod a_i$.
In this special case, the stated inequality is just the usual inequality
between arithmetic mean and geometric mean. 
\end{proof}

Applying the case $n=2$ of this theorem to our setting, we get
\[
e_x(B|_Y) \ge (2/c)^2 > 4r^2,
\]
which is impossible because $B|_Y$, being cut out on $Y$ by equations of degree $r$, 
is contained in a zero-dimensional complete intersection scheme of degree $4r^2$. 
This finishes the proof of Iskovskikh--Manin's theorem.

\section{Beyond connectedness: first considerations}

Consider now the general case $X = X_N \subset \P^N$. 
We would like to apply Corti's strategy for all $N \ge 4$. 
Again, we use the notation of Proposition~\ref{p:NF}
and assume the existence of a non-regular birational map $\f \colon X \rat X'$. 
Then there is a prime divisor $E$ on a resolution $f\colon \~X \to X$ such that
\[
\tfrac 1r\. \ord_E(B) > \ord_E(K_{\~X/X}).
\]
and $E$ maps to a closed point $x \in X$ by Lemma~\ref{l:mult1}.

In order to cut down the base scheme $B$ to a zero-dimensional scheme, 
we need to restrict to a surface. Let $Y \subset X$ be the
surface cut out by $N-3$ general hyperplane sections through $x$. 
Then $B|_Y$ is zero dimensional. 
As before, the Connectedness Theorem implies that $(Y,\tfrac 1r B|_Y)$
is not log canonical at $x$, and we get the inequality $e_x(B|_Y) > 4r^2$. 

If $X$ is sufficiently \emph{general} in moduli, then it 
contains certain cycles of low degree and high multiplicity at $x$, 
and the inequality is still sufficient to conclude that $X$ is birationally superrigid, 
as shown in \cite{Puk98}.
However, if $X$ is \emph{arbitrary} in moduli, then for $N \ge 5$ the inequality 
is not strong enough to give a contradiction 
as now $B$ is cut out by equations of degree $r$ on a surface of degree $N$ (rather than 4). 

The issue is that we are cutting down several times, but we are not
keeping track of this. Morally, we should expect that as we
keep cutting down, the singularities of the pair get `worse' at each step. 
One can try to measure this by looking at the multiplier ideal
of the pair. 
If we cut down to a general hyperplane section
$H \subset X$ through $x$, then the pair $(H,\frac 1r B|_H)$ is not 
log canonical, and this implies that its multiplier ideal 
$\J(H,\frac 1r B|_H)$ is nontrivial at $x$. 
In fact, we can do better: if we set $c = \lct(H,B|_H)$
then $\J(H,cB|_H)$ is nontrivial at $x$,
which is a stronger condition since $c < 1/r$. 
The question is: What happens when we cut further down?
Optimally, the multiplier ideal will `get deeper' at each step and
we can use this information to get a better bound on the multiplicity of $B|_Y$. 

Suppose for instance that the proper transform $\~Y$ intersects (transversely) $E$, 
and let $F$ be a component of $E|_{\~Y}$. Since $Y$ has codimension $N-3$, 
the adjunction formula gives, this time,
\[
c\. \ord_F(B|_Y) - (N-4)\.\ord_F(\fm_{Y,x}) \ge \ord_F(K_{\~Y/Y}) + 1.
\]
This condition can be interpreted in the language of multiplier ideals
by saying that, locally at $x$, 
\[
(\fm_{Y,x})^{N-4} \not\subset \J(Y,cB|_Y).
\]
Applying Theorem~\ref{t:J-e} below, we get
\[
e_x(B|_Y) \ge 4(N-3)/c^2 > 4(N-3)r^2.
\]
For $N \ge 4$, this contradicts the fact that $B|_Y$ is contained
in a zero-dimensional complete intersection scheme of degree $Nr^2$. 

The result we have applied is the following reformulation of Theorem~2.1 of \cite{dFEM03}, 
which in turn is a small variant of Theorem~\ref{t:lct-e}.

\begin{thm}
\label{t:J-e}
Let $V$ be a smooth variety of dimension $n$, let
$Z \subset V$ be a scheme supported at a closed point $x \in V$, and let $c > 0$. 
Assume that $(\fm_{V,x})^k \not\subset \J(V,cZ)$ locally near $x$.
Then $Z$ has multiplicity
\[
e_x(Z) \ge (k+1)(n/c)^n.
\]
\end{thm}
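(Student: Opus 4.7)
The plan is to follow the strategy sketched for Theorem~\ref{t:lct-e}: reduce to the monomial case by flat degeneration, then solve the monomial case by combining Howald's description of multiplier ideals with the inequality between arithmetic and geometric means. The new ingredient is the datum $(\fm_{V,x})^k \not\subset \J(V,cZ)$, which must be carried through both steps.

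For the monomial case, assume $\I_Z$ is an $\fm_{V,x}$-primary monomial ideal $\fa$ in local coordinates $u_1,\ldots,u_n$ at $x$. Howald's theorem describes $\J(V,c\fa)$ as the monomial ideal spanned by those $u^v$ with $v+\mathbf{1}$ in the interior of the dilated Newton polyhedron $c\cdot P(\fa)$. The hypothesis then produces some $v\in\Z_{\ge 0}^n$ with $|v|\ge k$ for which $q := (v+\mathbf{1})/c$ is not in $\Int(P(\fa))$. A supporting hyperplane of the form $\{\sum_i y_i/a_i = 1\}$ with $a_i > 0$ separating $q$ from $\Int(P(\fa))$ yields both
\[
P(\fa) \ \subset\ \{y\in\R_{\ge 0}^n : \textstyle\sum_i y_i/a_i \ge 1\}
\and
\sum_i (v_i+1)/a_i \ \le\ c.
\]
The inclusion of polyhedra gives $e_x(\fa) \ge \prod_i a_i$ by direct volume comparison, while AM--GM applied to the numbers $\alpha_i := (v_i+1)/a_i$ gives $\prod_i \alpha_i \le (c/n)^n$, hence
\[
\prod_i a_i \ =\ \prod_i(v_i+1)\cdot \prod_i \frac{1}{\alpha_i} \ \ge\ \prod_i(v_i+1)\cdot (n/c)^n \ \ge\ (k+1)(n/c)^n,
\]
where the final inequality uses the elementary bound $\prod_i(v_i+1)\ge |v|+1 \ge k+1$ valid for any $v\in\Z_{\ge 0}^n$. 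This settles the monomial case.

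The main obstacle is the reduction of the general case to the monomial one. I would fix a generic system of parameters at $x$ together with a term order and flatly degenerate $\I_Z$ to its initial monomial ideal $\ini(\I_Z)$ in a family over $\A^1$. The Samuel multiplicity $e_x(Z)$ is preserved in such a family, being read off from the Hilbert--Samuel polynomial. The key semicontinuity input is that multiplier ideals on the special fiber are contained in those on the generic fiber, which implies that the non-containment $(\fm_{V,x})^k \not\subset \J(V,cZ)$ is inherited by $\ini(\I_Z)$; the monomial case then delivers the desired bound. This is precisely the semicontinuity step already implicit in the proof of Theorem~\ref{t:lct-e}, and the delicate point is choosing the one-parameter subgroup producing the degeneration generically enough to guarantee the required inclusions of multiplier ideals.
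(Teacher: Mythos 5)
Your monomial case is correct, and it follows the route the paper sketches for Theorem~\ref{t:lct-e} (the paper itself does not reprove Theorem~\ref{t:J-e}, quoting it from \cite{dFEM03}): Howald's theorem, a supporting hyperplane through $(v+\mathbf{1})/c$, AM--GM, and $\prod_i(v_i+1)\ge |v|+1$ is exactly the mechanism that produces the factor $k+1$. The gap is in the reduction to the monomial case. A Gr\"obner degeneration preserves the colength $\ell(\O_{V,x}/\I_Z)$ (that is what flatness gives you: the constant Hilbert polynomial of the finite scheme), but it does \emph{not} preserve the Samuel multiplicity of the ideal: since $\ini(\I_Z^m)\supseteq(\ini(\I_Z))^m$, one only gets $e(\ini(\I_Z))\ge e(\I_Z)$, and this can be strict even, indeed especially, in generic coordinates. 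For example, $\fa=(x^2-y^2,\,xy)\subset\C[x,y]$ is a complete intersection with $e(\fa)=4$, while in generic coordinates its initial ideal (for any term order) is $(x^2,xy,y^3)$, whose multiplicity is $5$; so genericity of the one-parameter subgroup does not rescue the claim. Since the semicontinuity goes the wrong way, a lower bound for the multiplicity of $\ini(\I_Z)$ gives no lower bound for $e_x(Z)$, and the sentence ``the Samuel multiplicity is preserved in such a family'' is where the argument breaks.

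The standard repair, which is how \cite{dFEM03,dFEM04} deal with exactly this point, is to degenerate colengths of all powers rather than the multiplicity itself. Your monomial computation, combined with the lattice-point count $\ell(\O_{V,x}/\fb)\ge\operatorname{covol}(P(\fb))\ge\frac1{n!}\prod_i a_i$, proves: if $\fb$ is $\fm$-primary and $\fm^k\not\subset\J(V,c'\fb)$, then $\ell(\O_{V,x}/\fb)=\ell(\O_{V,x}/\ini(\fb))\ge\frac{k+1}{n!}(n/c')^n$, where the non-containment passes to $\ini(\fb)$ by the semicontinuity of multiplier ideals you already invoke (that part is fine and is where the generic choice of the degeneration enters). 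Now apply this with $\fb=\I_Z^m$ and $c'=c/m$, which is legitimate because $\J(V,cZ)=\J(V,\tfrac{c}{m}\cdot Z_m)$ for $Z_m$ the scheme defined by $\I_Z^m$, and let $m\to\infty$ using $e_x(Z)=\lim_m \tfrac{n!}{m^n}\,\ell(\O_{V,x}/\I_Z^m)$; this recovers $e_x(Z)\ge(k+1)(n/c)^n$. With this modification your argument goes through.
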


Unfortunately, this computation breaks down if $\~Y$ is disjoint from $E$, 
and there is no stronger version of the Connectedness Theorem to fix it. 
In fact, in general the multiplier ideal simply fails to `get deeper'. 
This is already the case in the following simple example.

\begin{eg}
Let $D = ( y^2=x^3) \subset \A^2$ and $c = 5/6$. Then $\J(\A^2,cD) = \fm_{\A^2,0}$. 
\begin{enumerate}
\item
If $L \subset \A^2$ is a general line through the origin, then $\J(L,cD|_L) = \fm_{L,0}$. 
\item
If $L = (y=0) \subset \A^2$, then $\J(L,cD|_L) = (\fm_{L,0})^2$. 
\end{enumerate}
\end{eg}

\section{The role of the space of arcs}

Let us discuss the example a little further. 
The multiplier ideal of $(\A^2,cD)$ can be computed by taking the well-known
log resolution $f \colon \~X \to \A^2$ of the cusp given by a 
sequence of three blow-ups. The exceptional divisor $E$ extracted by the third 
blow-up computes the log canonical threshold of the pair (which is $c = 5/6$), 
and is responsible for the nontrivial multiplier ideal. 
That is, we have $c\.\ord_{E}(D) = \ord_{E}(K_{\~X/\A^2}) + 1$, and 
\[
\J(\A^2,cD) = f_*\O_{\~X}(\ru{K_{\~X/\A^2} - cf^*D}) = f_*\O_{\~X}(-E) = \fm_{\A^2,0}.
\]
No matter how we choose $L$, 
the proper transform $\~L$ will always be disjoint from $E$, 
so we cannot rely on the computation done in the previous section.  
What makes the choice of $L$ in case~(b) more special 
is that in this case the proper transform of $L$ on the first blow-up
$\Bl_0\A^2$ contains the center of $\ord_{E}$ on the blow-up. 
The intuition is that this choice brings $\~L$ `closer' to $E$, at 
least `to the first order'. 

In order to understand what is really happening, 
we work with formal arcs. 
Given a variety $X$, the \emph{arc space} is given, set theoretically, by
\[
X_\infty = \{\a \colon \Spec \C[[t]] \to X \}.
\]
This space inherits a scheme structure from his description 
as the inverse limit of the jet schemes
which parametrize maps $\Spec \C[t]/(t^{m+1}) \to X$, 
and comes with a morphism $\p \colon X_\infty \to X$ 
mapping an arc $\a(t)$ to $\a(0) \in X$. 
Note that $X_\infty$ is not Noetherian, is not of finite type, 
and does not have finite topological dimension.

Given a resolution $f \colon \~X \to X$ and a smooth prime divisor $E$ on it, 
we consider the diagram
\[
\xymatrix{
\~\p^{-1}(E) \ar[d] \ar@{}[r]|-\subset & \~X_\infty \ar[d]_{\~\p} \ar[r]^{f_\infty}
& X_\infty \ar[d]^\p & C_X(E) \ar[d] \ar@{}[l]|-\supset \\
E \ar@{}[r]|-\subset & \~X \ar[r]^f & X  & x \ar@{}[l]|-\ni 
}
\]
where $f_\infty$ is the map on arc spaces given by composition and
$C_X(E)$ is the \emph{maximal divisorial set} of $E$, defined by
\[
C_X(E) = \ov{f_\infty(\~\p^{-1}(E))}.
\]
This set is irreducible and only depends on the valuation $\ord_E$.
The following theorem due to \cite{ELM04} is the key to relate
this construction to discrepancies and multiplier ideals. 

\begin{thm}
\label{t:ELM}
Suppose that $X$ is a smooth variety. 
\begin{enumerate}
\item
The generic point $\a$ of $C_X(E)$ defines a valuation 
$\ord_\a \colon \C(X)^* \to \Z$, and this valuation coincides with $\ord_E$. 
\item
The set $C_X(E)$ has finite topological codimension in $X_\infty$, and 
this codimension is equal to $\ord_E(K_{\~X/X}) + 1$. 
\end{enumerate}
\end{thm}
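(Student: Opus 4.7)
The plan is to treat the two parts in order, with part (a) providing the identification of the valuation that makes part (b) a pure codimension computation in arc spaces.

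For part (a), I would first observe that every arc $\a \in X_\infty$ induces a map $\ord_\a \colon \cO_{X,\a(0)} \to \Z_{\ge 0} \cup \{\infty\}$ by $h \mapsto \ord_t(\a^*h)$, and that this extends to a valuation on $\C(X)^*$ whenever $\a$ does not factor through any proper closed subscheme. Since $f \colon \~X \to X$ is proper and birational, the valuative criterion of properness lifts any arc on $X$ uniquely to an arc on $\~X$, so the generic point $\a$ of $C_X(E)$ is the image of the generic point $\~\a$ of $\~\p^{-1}(E)$, and $\ord_\a(h) = \ord_{\~\a}(f^*h)$ for every $h \in \C(X)^*$. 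A local calculation at the generic point of $E$ shows that $\~\a$ is transversal to $E$, i.e., $\ord_{\~\a}(e) = 1$ for a local equation $e$ of $E$; combining this with the factorization $f^*h = e^{\ord_E(h)} u$, where $u$ is a unit along $E$, gives $\ord_\a(h) = \ord_E(h)$, which establishes (a).

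For part (b), the strategy is to compute the topological codimension through the behavior of $f_\infty$ on contact loci. Since $E$ is a smooth divisor on the smooth variety $\~X$, the subset $\~\p^{-1}(E) \subset \~X_\infty$ is a cylinder of codimension $1$. The key input is the Kontsevich--Denef--Loeser change-of-variables principle for arc spaces: for a proper birational morphism $f$ between smooth varieties, the restriction of $f_\infty$ to a cylinder $\~C \subset \~X_\infty$ on which $\ord(K_{\~X/X})$ takes the constant value $e$ is, after truncation to sufficiently high jet level, a piecewise-trivial fibration with fibers of dimension $e$ onto its image. Applying this to $\~C = \~\p^{-1}(E)$, where $\ord(K_{\~X/X})$ is generically equal to $\ord_E(K_{\~X/X})$, immediately yields $\codim_{X_\infty}(C_X(E)) = 1 + \ord_E(K_{\~X/X})$.

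The main obstacle will be making rigorous the fiber-dimension computation that underlies the change-of-variables formula. Because $X_\infty$ is neither Noetherian nor of finite type, topological codimension cannot be manipulated on $X_\infty$ directly; instead one must descend to the truncations $X_m$ and $\~X_m$, identify the Jacobian ideal $\Jac_f = \cO_{\~X}(-K_{\~X/X})$ as the source of the defect, and appeal to Greenberg's approximation theorem to ensure that, for $m$ large, the image $f_m(\~\p_m^{-1}(E))$ and its preimage become cylindrical and their dimensions stabilize. Once the generic fiber of $f_m$ over this image is shown to have dimension exactly $\ord_E(K_{\~X/X})$, assembling this with the codimension $1$ count for $\~\p^{-1}(E)$ in $\~X_\infty$ completes the proof of (b).
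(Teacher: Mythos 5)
Your proposal is correct and takes essentially the same route the paper indicates: the paper does not reprove this result (it is quoted from \cite{ELM04}) but sketches exactly your argument, namely part (a) by composing the pullback along the generic arc of $C_X(E)$, lifted to $\~X$, with $\ord_t$, and part (b) via the fiber dimensions of the jet-level maps $f_m \colon \~X_m \to X_m$, i.e.\ the change-of-variables principle of motivic integration. The only point to tidy up is that $\ord(K_{\~X/X})$ is only \emph{generically} equal to $\ord_E(K_{\~X/X})$ on $\~\p^{-1}(E)$, so the piecewise-trivial fibration statement should be applied to the dense sub-cylinder of arcs having contact order exactly one with $E$ and order zero along the other exceptional divisors (whose image still has closure $C_X(E)$), which is what your word ``generically'' already implicitly does.
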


It is easy to guess how the valuation is defined:
the generic point of $C_X(E)$ is a $K$-valued arc $\a \colon \Spec K[[t]] \to X$,
the pullback map $\a^* \colon \O_{X,\p(\a)}\to K[[t]]$ extends
to an inclusion of fields $\a^* \colon \C(X) \inj K((t))$, and the valuation
is obtained by simply composing with the valuation $\ord_t \colon K((t))^* \to \Z$. 
The computation of the codimension of $C_X(E)$ uses the
description of the fibers of the maps at the jet levels $f_m\colon \~X_m \to X_m$
and is essentially equivalent to the change-of-variable formula in motivic integration.

For our purposes, the advantage of working with divisorial sets in arc spaces is that,
given a subvariety $Y \subset X$ containing the center of $\ord_E$, even 
if the proper transform $\~Y$ is disjoint from $E$, the arc space $Y_\infty$
(which is naturally embedded in $X_\infty$) always intersects $C_X(E)$. 
We can then pick an irreducible component 
\[
C \subset (Y_\infty \cap C_X(E)).
\]
In general, $C$ itself may not be a maximal divisorial set. However,  
it is not too far from it. In the language of \cite{ELM04}, one says that
$C$ is a \emph{cylinder} in $Y_\infty$, which essentially means that $C$
is cut out by finitely many equations (maximal divisorial 
sets in arc spaces of smooth varieties are examples of cylinders). The upshot is
that the generic point $\b$ of $C$ defines a valuation $\val_\b$ of $\C(Y)$, 
and we can find a prime divisor $F$ over $Y$ and a positive integer $q$ such that
\[
\val_\b = q\.\val_F \and \codim(C,Y_\infty) \ge q\.(\ord_F(K_{\~Y/Y}) + 1).
\]
Now we can start relating multiplier ideals, since 
we can easily compare $q\.\ord_F$ to $\val_E$, 
and we control the equations cutting out $Y_\infty$ inside $X_\infty$
and hence how the codimension of $C$ compares to that of $C_X(E)$.

In order to control how the multiplier ideal behaves under restriction, 
and to show that it gets deeper, we need to ensure that
if $E$ has center $x \in X$ then
$q\.\ord_F(\fm_{Y,x}) = \val_E(\fm_{X,x})$. 
In general, there is only an inequality. 
A tangency condition on $Y$ is the first step to achieve this. 
This condition alone is not enough in general, but 
it suffices in the homogeneous setting, when $X = \A^n$ and the valuation
$\ord_E$ is invariant under the homogeneous $\C^*$-action on 
a system of coordinates centered at $x$. 
The following result is proved in \cite{dF13}.

\begin{thm}
\label{t:restr}
Let $X = \A^n$, let $Z \subset \A^n$ be a closed subscheme, and let $c > 0$.
Assume that there is a prime divisor $E$ on some resolution $\~X \to \A^n$
with center a point $x \in X$ such that
\begin{enumerate}
\item
$c\.\ord_E(Z) \ge \ord_E(K_{\~X/\A^n}) + 1$, and 
\item
the valuation $\ord_E$ is invariant under the homogeneous $\C^*$-action on 
a system of affine coordinates centered at $x$. 
\end{enumerate}
Let $Y = \A^{n-k} \subset\A^n$ be a linear subspace 
of codimension $k$ through $x$ that is tangent to
the direction determined by a general point of the center of $E$ in $\Bl_x\A^n$. 
Then 
\[
(\fm_{Y,x})^k \not\subset \J(Y,cZ|_Y).
\]
\end{thm}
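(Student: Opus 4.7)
The plan is to construct a prime divisor $F$ over $Y$ with center $x$ such that
\[
c \cdot \ord_F(Z|_Y) \geq \ord_F(K_{\~Y/Y}) + 1 + k \cdot \ord_F(\fm_{Y,x}),
\]
which by the definition of the multiplier ideal is equivalent to $(\fm_{Y,x})^k \not\subset \J(Y, cZ|_Y)$. I will produce $F$ via the arc-space strategy of the preceding section.

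By Theorem~\ref{t:ELM}, $C_X(E) \subset X_\infty$ is an irreducible cylinder of codimension $\ord_E(K_{\~X/\A^n})+1$ whose generic arc realizes $\ord_E$. The tangency hypothesis together with the homogeneity of $\ord_E$ guarantees that $C_X(E) \cap Y_\infty$ is nonempty; take a top-dimensional irreducible component $C$. This is a cylinder in $Y_\infty$, so its generic point $\b$ gives a valuation $\val_\b = q\cdot \ord_F$ for a prime divisor $F$ over $Y$ with center $x$ and some positive integer $q$, with $\codim(C, Y_\infty) \geq q(\ord_F(K_{\~Y/Y}) + 1)$. Since $\b \in C_X(E)$, one has $\val_\b(g) \geq \ord_E(g)$ for every $g \in \C[X]$; in particular $q\cdot \ord_F(Z|_Y) \geq \ord_E(Z)$.

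Fix affine coordinates $y_1,\dots,y_n$ at $x$ with $\I_Y = (y_1,\dots,y_k)$. Writing $C_X(E) = \p_N^{-1}(S_N)$ for $N \gg 0$, every arc in $S_N$ automatically satisfies $y_i(\a(t)) \equiv 0 \pmod{t^{\ord_E(y_i)}}$, so the further condition $\a \in Y_\infty$ imposes only $\sum_{i=1}^k (N+1 - \ord_E(y_i))$ new equations on $S_N$; subtracting $\codim(Y_N, X_N) = k(N+1)$ gives
\[
\codim(C, Y_\infty) \leq \ord_E(K_{\~X/\A^n}) + 1 - \sum_{i=1}^k \ord_E(y_i).
\]
Set $e := \ord_E(\fm_{X,x})$. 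By homogeneity, $\ord_E$ takes value $e$ on a linear form iff that form does not vanish at a generic point $v$ of the center of $E$ in $\Bl_x \A^n$. Tangency forces $y_i(v) = 0$ for $i \leq k$, hence $\ord_E(y_i) \geq e + 1$ and $\sum_{i=1}^k \ord_E(y_i) \geq k e$. Moreover, the equations cutting $C$ out of $C_X(E)$ involve only the coefficients of $y_1,\dots,y_k$, so $\b$ remains generic on the transverse coordinates, giving $\val_\b(y_j) = \ord_E(y_j)$ for every $j > k$; since $v \neq 0 \in T_x Y$, some $y_j$ with $j > k$ satisfies $y_j(v) \neq 0$, whence
\[
q \cdot \ord_F(\fm_{Y,x}) = \min_{j > k} \val_\b(y_j) = \min_{j > k} \ord_E(y_j) = e.
\]

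Combining these ingredients, and using hypothesis (a) in the second inequality below:
\begin{align*}
c \cdot q \cdot \ord_F(Z|_Y)
&\geq c \cdot \ord_E(Z) \geq \ord_E(K_{\~X/\A^n}) + 1 \\
&\geq \codim(C, Y_\infty) + \sum_{i=1}^k \ord_E(y_i) \\
&\geq q(\ord_F(K_{\~Y/Y}) + 1) + k q \cdot \ord_F(\fm_{Y,x}),
\end{align*}
and dividing through by $q$ yields the stated inequality. The main obstacle is the identity $q\cdot \ord_F(\fm_{Y,x}) = e$: as the paragraph preceding the theorem emphasizes, this fails in general and truly requires both hypotheses, with tangency placing all ``special'' linear forms (those on which $\ord_E$ strictly exceeds $e$) inside $\I_Y$, and homogeneity ensuring that the equations defining $C \subset C_X(E)$ involve only these special coordinates, so that passing from $X$ to $Y$ loses no minimum of $\ord_E$ on the maximal ideal.
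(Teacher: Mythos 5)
Your overall skeleton---reducing the non-containment $(\fm_{Y,x})^k \not\subset \J(Y,cZ|_Y)$ to a single valuative inequality for a divisor $F$ over $Y$, producing $F$ from an irreducible component $C$ of $Y_\infty \cap C_X(E)$, and squeezing $\codim(C,Y_\infty)$ between the lower bound $q(\ord_F(K_{\~Y/Y})+1)$ and the upper bound $\ord_E(K_{\~X/\A^n})+1-\sum_{i\le k}\ord_E(y_i)$ obtained by counting the jet equations cutting out $Y_\infty$---is exactly the strategy this survey sketches (note the survey does not prove Theorem~\ref{t:restr}; it defers the proof to \cite{dF13}), and those steps are essentially sound. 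The genuine gap is the pivotal identity $q\.\ord_F(\fm_{Y,x}) = e$. Your justification---that the equations cutting $C$ out of $C_X(E)$ involve only the jet coefficients of $y_1,\dots,y_k$, hence the generic point $\b$ of $C$ ``remains generic'' in the coefficients of $y_{k+1},\dots,y_n$, so that $\val_\b(y_j)=\ord_E(y_j)$ for $j>k$---is not a valid inference. The cylinder $C_X(E)$ (equivalently $S_N\subset X_N$) is irreducible but has no product structure separating the two groups of coordinates, and imposing equations involving only one group can force further degeneration in the other group on every component of the intersection: already for $S=\{w=u^2\}\subset\A^2$, intersecting with $\{u=0\}$ forces $w=0$. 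From $\b\in C_X(E)$ one only gets, by specialization from the generic point, the inequality $\val_\b(y_j)\ge\ord_E(y_j)$, and nothing in your argument rules out strict inequality for every $j>k$.

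This is precisely the step the paper singles out as the real difficulty: it states that one needs $q\.\ord_F(\fm_{Y,x})=\val_E(\fm_{X,x})$, that in general only an inequality holds, and that ``a tangency condition on $Y$ is the first step\dots\ This condition alone is not enough in general, but it suffices in the homogeneous setting.'' In your proof homogeneity does no actual work: the only place you invoke it is to characterize the linear forms $\ell$ with $\ord_E(\ell)=e$ as those not vanishing at a general point of the center of $E$ in $\Bl_x\A^n$, but that characterization holds for any divisorial valuation with center $x$, invariant or not. So your argument, if it were correct, would establish the theorem from the tangency hypothesis alone, which the paper indicates is false. A correct proof must use the $\C^*$-invariance of $\ord_E$ (equivalently, the equivariant structure of $C_X(E)$) in an essential way to control the specialization from the generic arc of $C_X(E)$ to $\b$ and recover the equality $\val_\b(\fm_{Y,x})=e$; that analysis is the actual content of the theorem in \cite{dF13} and is missing here.
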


Note that the conclusion of the theorem is precisely the 
condition assumed in Theorem~\ref{t:J-e}. 
Unfortunately we cannot apply this theorem directly to where we left in the proof of
Theorem~\ref{t:X_N-P^N} because we first need to reduce to a homogeneous setting. 
This reduction is probably the most 
delicate part of the proof of Theorem~\ref{t:X_N-P^N}.
To make the reduction, we use linear projections 
to linear spaces and flat degenerations to homogeneous ideals. The idea of using linear
projections first appeared in \cite{Puk02}, where a proof of birational rigidity
was proposed but turned out to contain a gap.
Theorem~\ref{t:J-e} is hidden behind the proof of a certain inequality on log canonical 
thresholds under generic projection, just like it was in the proof of
the main theorem of \cite{dFEM03}.
Nadel's vanishing theorem is used in the end to draw the desired contradiction. 
This part of the proof is technical and goes beyond the purpose of this note; 
for more details, we refer the interested reader to \cite{dF13}.

\section{What to expect for Fano hypersurfaces of higher index}

Birational rigidity fails for hypersurfaces $X = X_d \subset \P^N$ of degree $d < N$
for a very simple reason:
each linear projection $\P^N \rat \P^k$, for $0 \le k \le N-d$, 
induces a Mori fiber structure on the hypersurface. 
If the center of projection is not contained in $X$, then the general fiber
of the Mori fiber space is given by a Fano hypersurface of index $N+1-d-k$. 
Note that, by the Lefschetz Hyperplane Theorem, if $k \ge N/2$ (and $d \ge 2$) then 
the center of projection cannot be contained in $X$. 

In low dimensions, other Mori fiber spaces may appear. 
Consider for instance, the case $X = X_3 \subset \P^4$. 
As explained above, $X$ admits 
fibrations in cubic surfaces over $\P^1$, each induced by a linear projection
$\P^4 \rat \P^1$. 
Additionally, for every line $L \subset X$ the linear
projection $\P^4 \rat \P^2$ centered at $L$ induces, birationally, a
conic bundle structure of $X$ onto $\P^2$. 
Furthermore, the projection from any point $x \in X$ induces 
a birational involution of $X$ which swaps the two sheets of the
rational cover $X \rat \P^3$. 
However, these two last constructions are more specific of the low
dimension and low degree of the hypersurface, 
and do not generalize when the degree and dimension get larger.  

With this in mind, it is natural to consider the following problem.

\begin{problem}
\label{prob:1}
Find a (meaningful) function $g(N)$ such that
for every $X_d \subset \P^N$, with $g(N) \le d \le N$,
the only Mori fiber spaces birational to $X$ are those induced by the
linear projections $\P^N \rat \P^k$ for $0 \le k \le N-d$.
\end{problem}

\begin{rmk}
Taking $g(N) = N$ will of course work for $N \ge 4$
by Theorem~\ref{t:X_N-P^N}. The problem is to determine, if it exists, 
a better function which includes Fano hypersurfaces of higher index. 
Already proving that $g(N) = N-1$ works for $N \gg 1$ would be very interesting.\footnote{A 
partial solution
to Problem~\ref{prob:1} in the special case $g(N) = N-1$ has been recently announced in \cite{Puk}.}
\end{rmk}

A similar problem is the following.

\begin{problem}
\label{p:2}
Find a (meaningful) function $h(m,N)$ such that there is no $X_d \subset \P^N$, with
$h(m,N) \le d \le N$, birational to a Mori fiber space of fiber dimension $\le m$
(other than $X \to \Spec\C$ if $m = N-1$).
\end{problem}

As a first step, one can try to find solutions to these problems
that work for general (or, even, very general) 
hypersurfaces. Part~(b) of Theorem~\ref{t:kollar}
gives a great solution to the case $m=1$ 
of Problem~\ref{p:2} for very general hypersurfaces. 

\begin{bibdiv}
\begin{biblist}

\bib{BD85}{article}{
   author={Beauville, Arnaud},
   author={Donagi, Ron},
   title={La vari\'et\'e des droites d'une hypersurface cubique de dimension
   $4$},
   journal={C. R. Acad. Sci. Paris S\'er. I Math.},
   volume={301},
   date={1985},
   number={14},
   pages={703--706},
}

\bib{Cas01}{article}{
   author={Castelnuovo, Guido},
   title={Le trasformazioni generatrici del gruppo cremoniano del piano},
   journal={Atti Accad. Torino},
   volume={36},
   date={1901},
   pages={861--874},
}

\bib{Che00}{article}{
   author={Cheltsov, Ivan},
   title={On a smooth four-dimensional quintic},
   journal={Mat. Sb.},
   volume={191},
   date={2000},
   number={9},
   pages={139--160},
   issn={0368-8666},
   translation={
      journal={Sb. Math.},
      volume={191},
      date={2000},
      number={9-10},
      pages={1399--1419},
   },
}

\bib{CG72}{article}{
   author={Clemens, C. Herbert},
   author={Griffiths, Phillip A.},
   title={The intermediate Jacobian of the cubic threefold},
   journal={Ann. of Math. (2)},
   volume={95},
   date={1972},
   pages={281--356},
}

\bib{Cor95}{article}{
   author={Corti, Alessio},
   title={Factoring birational maps of threefolds after Sarkisov},
   journal={J. Algebraic Geom.},
   volume={4},
   date={1995},
   number={2},
   pages={223--254},
}

\bib{Cor00}{article}{
   author={Corti, Alessio},
   title={Singularities of linear systems and $3$-fold birational geometry},
   conference={
      title={Explicit birational geometry of 3-folds},
   },
   book={
      series={London Math. Soc. Lecture Note Ser.},
      volume={281},
      publisher={Cambridge Univ. Press},
      place={Cambridge},
   },
   date={2000},
   pages={259--312},
}

\bib{CR00}{collection}{
   title={Explicit birational geometry of 3-folds},
   series={London Mathematical Society Lecture Note Series},
   volume={281},
   editor={Corti, Alessio},
   editor={Reid, Miles},
   publisher={Cambridge University Press},
   place={Cambridge},
   date={2000},
   pages={vi+349},
}

\bib{dF02}{book}{
   author={de Fernex, Tommaso},
   title={Birational transformations of varieties},
   note={Thesis (Ph.D.)-University of Illinois at Chicago},
   date={2002},
}

\bib{dF13}{article}{
   author={de Fernex, Tommaso},
   title={Birationally rigid hypersurfaces},
   journal={Invent. Math.},
   volume={192},
   date={2013},
   pages={533--566},
}

\bib{dFEM03}{article}{
   author={de Fernex, Tommaso},
   author={Ein, Lawrence},
   author={Musta{\c{t}}{\u{a}}, Mircea},
   title={Bounds for log canonical thresholds with applications to
   birational rigidity},
   journal={Math. Res. Lett.},
   volume={10},
   date={2003},
   number={2-3},
   pages={219--236},
}

\bib{dFEM04}{article}{
   author={de Fernex, Tommaso},
   author={Ein, Lawrence},
   author={Musta{\c{t}}{\u{a}}, Mircea},
   title={Multiplicities and log canonical threshold},
   journal={J. Algebraic Geom.},
   volume={13},
   date={2004},
   number={3},
   pages={603--615},
}

\bib{ELM04}{article}{
   author={Ein, Lawrence},
   author={Lazarsfeld, Robert},
   author={Musta{\c{t}}{\u{a}}, Mircea},
   title={Contact loci in arc spaces},
   journal={Compos. Math.},
   volume={140},
   date={2004},
   number={5},
   pages={1229--1244},
}

\bib{Fan07}{article}{
   author={Fano, Gino},
   title={Sopra alcune variet\`a algebriche a tre dimensioni aventi tutti i generi nulli},
   journal={Atti Accad. Torino},
   volume={43},
   date={1907/08},
   pages={973--377},
}

\bib{Fan15}{article}{
   author={Fano, Gino},
   title={Osservazioni sopra alcune variet\`a non razionali aventi tutti i generi nulli},
   journal={Atti Accad. Torino},
   volume={47},
   date={1915},
   pages={1067--1071},
}

\bib{Fan44}{article}{
   author={Fano, Gino},
   title={Alcune questioni sulla forma cubica dello spazio a cinque
   dimensioni},
   journal={Comment. Math. Helv.},
   volume={16},
   date={1944},
   pages={274--283},
}

\bib{HM13}{article}{
   author={Hacon, Christopher D.},
   author={McKernan, James},
   title={The Sarkisov program},
   journal={J. Algebraic Geom.},
   volume={22},
   date={2013},
   number={2},
   pages={389--405},
}

\bib{Has99}{article}{
   author={Hassett, Brendan},
   title={Some rational cubic fourfolds},
   journal={J. Algebraic Geom.},
   volume={8},
   date={1999},
   number={1},
   pages={103--114},
}

\bib{Has00}{article}{
   author={Hassett, Brendan},
   title={Special cubic fourfolds},
   journal={Compositio Math.},
   volume={120},
   date={2000},
   number={1},
   pages={1--23},
}

\bib{IM71}{article}{
   author={Iskovskih, V. A.},
   author={Manin, Ju. I.},
   title={Three-dimensional quartics and counterexamples to the L\"uroth
   problem},
   journal={Mat. Sb. (N.S.)},
   volume={86(128)},
   date={1971},
   pages={140--166},
}

\bib{Kol95}{article}{
   author={Koll{\'a}r, J{\'a}nos},
   title={Nonrational hypersurfaces},
   journal={J. Amer. Math. Soc.},
   volume={8},
   date={1995},
   number={1},
   pages={241--249},
}

\bib{KSC04}{book}{
   author={Koll{\'a}r, J{\'a}nos},
   author={Smith, Karen E.},
   author={Corti, Alessio},
   title={Rational and nearly rational varieties},
   series={Cambridge Studies in Advanced Mathematics},
   volume={92},
   publisher={Cambridge University Press},
   place={Cambridge},
   date={2004},
   pages={vi+235},
}

\bib{KM98}{book}{
   author={Koll{\'a}r, J{\'a}nos},
   author={Mori, Shigefumi},
   title={Birational geometry of algebraic varieties},
   series={Cambridge Tracts in Mathematics},
   volume={134},
   note={With the collaboration of C. H. Clemens and A. Corti;
   Translated from the 1998 Japanese original},
   publisher={Cambridge University Press},
   place={Cambridge},
   date={1998},
   pages={viii+254},
}

\bib{Kuz10}{article}{
   author={Kuznetsov, Alexander},
   title={Derived categories of cubic fourfolds},
   conference={
      title={Cohomological and geometric approaches to rationality problems},
   },
   book={
      series={Progr. Math.},
      volume={282},
      publisher={Birkh\"auser Boston Inc.},
      place={Boston, MA},
   },
   date={2010},
   pages={219--243},
}

\bib{Laz04}{book}{
   author={Lazarsfeld, Robert},
   title={Positivity in algebraic geometry. II},
   series={Ergebnisse der Mathematik und ihrer Grenzgebiete. 3. Folge. A
   Series of Modern Surveys in Mathematics [Results in Mathematics and
   Related Areas. 3rd Series. A Series of Modern Surveys in Mathematics]},
   volume={49},
   note={Positivity for vector bundles, and multiplier ideals},
   publisher={Springer-Verlag},
   place={Berlin},
   date={2004},
   pages={xviii+385},
}

\bib{Man66}{article}{
   author={Manin, Ju. I.},
   title={Rational surfaces over perfect fields},
   journal={Inst. Hautes \'Etudes Sci. Publ. Math.},
   number={30},
   date={1966},
   pages={55--113},
}

\bib{Mor40}{article}{
   author={Morin, Ugo},
   title={Sulla razionalit\`a dell'ipersuperficie cubica generale dello
   spazio lineare $S_5$},
   journal={Rend. Sem. Mat. Univ. Padova},
   volume={11},
   date={1940},
   pages={108--112},
}

\bib{Noe72}{article}{
   author={Noether, M.},
   title={Zur Theorie der eindentigen Ebenentrasformationen},
   journal={Math. Ann.},
   volume={5},
   date={1872},
   number={4},
   pages={635--639},
}

\bib{OO}{article}{
   author={Odaka, Yuji},
   author={Okada, Takuzo},
   title={Birational superrigidity and slope stability of Fano manifolds},
   note={{\tt arXiv:1107.3910}},
}   

\bib{Puk87}{article}{
   author={Pukhlikov, A. V.},
   title={Birational isomorphisms of four-dimensional quintics},
   journal={Invent. Math.},
   volume={87},
   date={1987},
   number={2},
   pages={303--329},
}

\bib{Puk98}{article}{
   author={Pukhlikov, A. V.},
   title={Birational automorphisms of Fano hypersurfaces},
   journal={Invent. Math.},
   volume={134},
   date={1998},
   number={2},
   pages={401--426},
}

\bib{Puk02}{article}{
   author={Pukhlikov, A. V.},
   title={Birationally rigid Fano hypersurfaces},
   journal={Izv. Ross. Akad. Nauk Ser. Mat.},
   volume={66},
   date={2002},
   number={6},
   pages={159--186},
   issn={0373-2436},
   translation={
      journal={Izv. Math.},
      volume={66},
      date={2002},
      number={6},
      pages={1243--1269},
      issn={1064-5632},
   },
}

\bib{Puk}{article}{
   author={Pukhlikov, A. V.},
   title={Birational geometry of Fano hypersurfaces of index two},
   note={{\tt arXiv:math/0201302}},
}   

\bib{RT06}{article}{
   author={Ross, Julius},
   author={Thomas, Richard},
   title={An obstruction to the existence of constant scalar curvature
   K\"ahler metrics},
   journal={J. Differential Geom.},
   volume={72},
   date={2006},
   number={3},
   pages={429--466},
}

\bib{Seg51}{article}{
   author={Segre, Beniamino},
   title={On the rational solutions of homogeneous cubic equations in four
   variables},
   journal={Math. Notae},
   volume={11},
   date={1951},
   pages={1--68},
}

\bib{Seg60}{article}{
   author={Segre, Beniamino},
   title={Variazione continua ed omotopia in geometria algebrica},
   journal={Ann. Mat. Pura Appl. (4)},
   volume={50},
   date={1960},
   pages={149--186},
}

\bib{Tre84}{article}{
   author={Tregub, S. L.},
   title={Three constructions of rationality of a cubic fourfold},
   journal={Vestnik Moskov. Univ. Ser. I Mat. Mekh.},
   date={1984},
   number={3},
   pages={8--14},
}

\bib{Zar90}{article}{
   author={Zarhin, Yuri G.},
   title={Algebraic cycles over cubic fourfolds},
   journal={Boll. Un. Mat. Ital. B (7)},
   volume={4},
   date={1990},
   number={4},
   pages={833--847},
}

\end{biblist}
\end{bibdiv}

\end{document}